\theoremstyle{theorem}
\newtheorem{theorem}{Theorem}
\newtheorem{lemma}[theorem]{Lemma}
\theoremstyle{remark}
\numberwithin{equation}{section}
\newcommand{\ve}{\varepsilon}
\newcommand{\R}{\mathbb{R}}
\newcommand\huno{{\mathcal H}^1}
\DeclareMathOperator{\osc}{osc}
\begin{document}
\author{Filippo Santambrogio\thanks{\scriptsize\ CEREMADE, UMR CNRS 7534, Universit\'e Paris-Dauphine, Pl. de Lattre de Tassigny, 75775 Paris Cedex 16, FRANCE
\texttt{filippo@ceremade.dauphine.fr}}, Vincenzo Vespri\thanks{\scriptsize\ 
Dipartimento di Matematica Ulisse Dini,
Università degli studi di Firenze, 
Viale Morgagni, 67/a 50134 Firenze, ITALY 
\texttt{vespri@math.unifi.it}}}
\title{Continuity in two dimensions for a very degenerate elliptic equation}
\maketitle

{\bf Abstract:} An elliptic equation $\nabla\cdot (F(\nabla u))=f$ whose ellipticity strongly degenerates for small values of $\nabla u$ (say, $F=0$ on $B(0,1)$) is considered. The aim is to prove regularity for $F(\nabla u)$. The paper proves a continuity result in dimension two and presents some applications.

\section{Introduction}

We consider in this paper an elliptic equation of the form 
\begin{equation}\label{1.1}
\nabla\cdot (F(\nabla u))=f,
\end{equation}
where $F$ is the gradient of a convex function $H:\R^d\to\R$. This is one of the most classical non-linear elliptic equations, which arises as an optimality condition for the minimization of the functional $\int H(\nabla u)+\int fu$. When $H(z)=|z|^2$ or $H(z)=|z|^p$ we get the usual Laplace and $p-$Laplace equations, respectively, which have been investigated intensively in the literature and have provided a lot of regularity results on the solution $u$ (which is a priori supposed to belong to $H^1$ or $W^{1,p}$ only) according to the regularity of $f$. Most of the results have been extended to the case of variable coefficients (see the book by Gilbarg and Trudinger \cite{GilTru} for a compendium of the theory of elliptic regularity and the original paper by De Giorgi, \cite{DeG57}, where he proved a key H\"older regularity result) or of different functions $H$, which share anyway some properties of the square or of the $p-$th power. This latter case is much more difficult than that of the square (which gives a linear equation), mainly because of the degeneracy of $D^2H$ near $z=0$. Actually, lower bounds on $D^2H$ are often useful to give estimates on the norms of the solution in terms of the norms of $f$ and if $D^2H$ is allowed to tend to zero for small values for $\nabla u$ (which is the case fro $p>2$), extra difficulties arise. Yet, this is - roughly speaking - compensated by the fact that the smallness of the gradient already provides some regularity estimates and some results are still possible.

This is what is usually done in elliptic regularity but the equation we want to look here is even worse than the $p-$Laplace equation. Our main example is given by the function 
\begin{equation}\label{Hq}
H_{(q)}(z)= \frac 1q \left(|z|-1\right)_+^q,\qquad q>1
\end{equation}
which vanishes, together with its Hessian, on the whole ball $B_1$. This means that all the values of $\nabla u$ which are smaller than $1$ are a source of problems. Obviously, the regularity result that one may expect to prove must concern $F(\nabla u)$ instead of $u$ itself: it is evident that the homogeneous equation (the same, with $f=0$) would be solved by any $1-$Lipschitz function and that nothing more may be said on $u$. On the contrary, $F(\nabla u)$ is reasonably more regular, since either it vanishes or we are in a zone where $|\nabla u|>1$ and the equation is more elliptic. But gluing the two zones, which are not open sets with nice boundaries, is not trivial.

Up to our knowledge, the first need for a regularity result on $F(\nabla u)$ in the case of a degenerate function $F$ of this kind has been encountered in \cite{BraCarSan} for a variational problem linked to traffic congestion. In this problem, i.e. 
\begin{equation}\label{PHq}
\min \left\{ \int |\sigma|+\frac 1q |\sigma|^q\;:\; \sigma\in L^q(\Omega),\;\nabla\cdot\sigma = f,\;\sigma\cdot n =0\right\}
\end{equation}
the optimal $\bar{\sigma}$ equals $F(\nabla u)$ where $u$ is a solution of \eqref{1.1} with $F=\nabla H_{(q)}$, Neumann boundary conditions and $\frac 1p + \frac 1q =1$. For the sake of the applications of \cite{BraCarSan} boundedness and Sobolev regularity were all that was needed on $\bar{\sigma}$, and they are proven for functions $H$ of the form given in \eqref{Hq}. In particular it is proven that $\bar{\sigma}=F(\nabla u)$ belongs to $H^1\cap L^\infty$ in any dimension $d$. Continuity is not addressed, even if it would be interesting to obtain. We will explain in Section 6 the possible applications of a continuity result, both in the framework of traffic congestion and of the variational problem \eqref{PHq} in general.

Here we consider the more general case of a function $H$ whose Hessian is bounded from below outside any ball $B_{1+\delta}$ and we prove a continuity result which is valid in two dimensions, provided that we already know that $F(\nabla u)$ belongs to $H^1\cap L^\infty$. This is exactly the case for $H=H_{(q)}$. Yet, the assumptions on $f$ are more standard in this framework ($f\in L^{2+\ve}$), while the Sobolev results of \cite{BraCarSan} asked for Sobolev regularity of $f$ itself, which is not that natural in elliptic regularity. 

The proof of the present paper is two-dimensional because it follows a method developed by DiBenedetto and Vespri for different equations, which is based on the following idea: using the equation, one can prove that if the oscillation is not significantly reduced when passing from $B_R$ to $B_{\ve_0R}$, then the contribution of the crown $B_R\setminus B_{\ve_0R}$ to the Dirichlet energy (which is finite because of the $H^1$ assumption) is at least a certain value. This value, in the case of $d=2$, scales in a way so that it does not depend on $R$. This is used so as to prove a decay for the oscillation and get in the end a logarithmic modulus of continuity.

In our case we first prove results for functions of the type $(\partial u/\partial x_i - (1+\delta))_+$ and then deduce some results at the limit as $\delta\to 0$. This gives continuity of all the expression $g(\nabla u)$ for all the functions $g$ vanishing on $B_1$. Typically, this includes $F(\nabla u)$ itself.

Notice that all the results we present are local and that we did not try to improve them up to the boundary.

The strategy followed by the paper and its plan will be detailed in Section 2 below.

\section{The equation, the assumptions and the strategy}

Let $H:\R^d\to \R$ be a convex function, satisfying the key assumption that 
\begin{equation}\label{ass on H}
\mbox{ for all $\delta>0$ there exists } c_\delta>0\;:\; D^2H(z)\geq c_\delta I_d \mbox{ for all $z$ such that }|z|\geq 1+\delta,
\end{equation}
where $I_d$ is the identity matrix. The function $H$ may be for simplicity supposed $C^2$, (or $C^{1,1}_{loc}$). For notational purposes, as we already did in the introduction, we will denote $\nabla H$ by $F$, so that $F:\R^d\to \R^d$ is a vector function satisfying some monotonicity assumptions.
We will deal with solutions of the following equation:
$$\nabla\cdot (F(\nabla u))=f,$$ 
being $f$ a given datum in a suitable space. We will see that for most of the results we prove $L^{2+\ve}$ will be sufficient, while for some of those that we recall we would need $f$ in a Sobolev space.

Notice that this is the equation which is satisfied by the minimizers of
$$\min \int H(\nabla u) + \int fu$$
(we do not precise boundary conditions since  we are only interested in local results).
The function $H$ is allowed, for instance, to vanish on the whole $B_1$, as it is the case for the typical example $H=H_{(q)}$. Hence, $H$ is in general not strictly convex and no uniqueness is valid for the minimization problem above. Yet, it is easy to see that $F(\nabla u)$ is the same for all the minimizers. This vector field will be the main object of investigation. 
By the way, $F(\nabla u)$ is also the solution of the dual problem
$$\min \int H^*(\sigma) \;:\; \nabla\cdot\sigma=f.$$

\paragraph{Strategy.} For a solution $u$ of our problem, we write down the equation which is satisfied by a partial derivative $v=\partial u / \partial x_1$.

To do so, we are tacitly assuming that $u$ is regular (at least $H^2$), which is not at all obvious with the kind of equation we have. Hence the idea is: approximate the equation by truly elliptic equations (for instance changing the function $H$ into a function $H_\ve$, still satisfying \eqref{ass on H}, but satisfying also $C^-_\ve\leq D^2H_\ve\leq C^+_\ve$, for some non-uniform constant $C^\pm_\ve$, and fixing suitable boundary values: see for instance \cite{BraCarSan}); then write estimates on the solutions $u_\ve$ which will only depend on $c_\delta$ and not on the constants $C^\pm_\ve$. This estimate will pass to the limit as $\ve\to 0$. The functions $u_\ve$ will converge up to subsequences to a solution of the limit equation, which has no uniqueness, but the quantity $F(\nabla u)$ is the same for all the solutions. Hence, estimates on $F(\nabla u)$ will be true for the non-approximated equation as well. 

In the following, we will not precise any more the approximation, but ,with this approximated spirit in mind, let us differentiate the equation and get
$$\nabla\cdot (a(x) \cdot \nabla v)=f',$$ 
being $a(x)=D^2H(\nabla u(x))$ the Hessian matrix of $H$ and $f'$ the partial derivative of $f$. This is a linear elliptic equation with the following property: $a$ is not uniformly elliptic and can also vanish, but on the points where $|v(x)|\geq 1+\delta$ one has a fortiori $|\nabla u(x)|\geq 1+\delta$ and hence $a(x)\geq c_\delta I_d$. 

The main object of our preliminary analysis will be the function $v_\delta=h_{1+\delta}(v)$ (we define $h_{k}(t):=(t-k)_+$). Since $h_{1+\delta}$ is a Lipschitz and convex function, $v_\delta$ is a subsolution of
$$\nabla\cdot (a(x) \cdot \nabla v_\delta)= f'I_{v>1+\delta}.$$ 

This new equation may be assumed to be uniformly elliptic, since the values of the matrix $a$ on the points where $v_\delta=0$ are not important. Yet, $v_\delta$ is only a subsolution and this will not be sufficient to establish all the results we need for regularity. At a given point, we will also need to use the equation which is satisfied by $v$ itself (which is not uniformly elliptic).

In Section 3 we will prove a continuity result for $v_\delta$, in dimension 2 and under some a priori assumption on $v_\delta$ itself. We will need to suppose that $v_\delta$ is a $H^1$ function and that it is bounded. Notice that, due to the fact that we tacitly regularize, when we say that $v_\delta$ or $f$ belong to a certain functional space, we actually mean that the estimates we prove will only depend on the norm in this space, and hence will be inherited by the solutions corresponding to non-regular data, provided we stay in the same space. Also the continuity result is given in the same spirit, in the sense that we will prove a quantified continuity (in our case, the modulus of continuity will be $\omega(R)=C|\ln R|^{-1/2}$ and the constants will depend on $c_\delta$). The techniques follow a strategy by DiBenedetto and Vespri (see \cite{DiBVes}) which amounts at proving some oscillation decay from a ball to a smaller one under some conditions. These conditions are such that violating them implies that the solution accumulates a fixed amount of Dirichlet Energy in the crown between the two balls and the $H^1$ assumption allows to conclude.\\
Section 3 aims at being as general as possible and some of the lemmas will be stated in $\R^d$ instead of $\R^2$.

In Section 4 we will see how to link $v_\delta$ to $F(\nabla u)$. In particular this will allow to translate Sobolev results on $F(\nabla u)$ into corresponding results for $v_\delta$, and also to state that $v_\delta$ as well does not depend on the non-unique solution $u$.

In Section 5 we will conclude wider continuity results on functions of $\nabla u$, as a consequence of the results on $v_\delta$.

Section 6 will present some interesting applications, in cases when we do actually know that $F(\nabla u)$ is Sobolev and bounded. 
This is for instance the case for $H=H_{(q)}$ (see \cite{BraCarSan}). The $H^1$ regularity mainly depends on structure assumptions on the function $H$ and may fail to be valid in general, while the $L^\infty$ one seems to be easier, since it only depends on the fact that functions of the kind $(\partial u/\partial x_i - (1+\delta))_+$ are  subsolution of a uniformly elliptic equation.

\section{Continuity for $v_\delta$ in dimension 2}

The section will be opened by some very classical lemmas which have proven to be useful several times in elliptic regularity. For brevity, we omit the proofs and we refer to the book by DiBenedetto \cite{DiBbook}. We also refer to the original work by De Giorgi \cite{DeG57}.

\begin{lemma}\label{algebra}
If $(Y_n)_n$ is a sequence of non-negative numbers satisfying
$$Y_{n+1}\leq cb^nY_n^{1+\beta},\quad Y_1\leq c^{-1/\beta}b^{-(\beta+1)/\beta^2},\quad c,b,\beta>0,$$
then $Y_n\to 0$.
\end{lemma}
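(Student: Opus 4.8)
This is the classical "fast convergence" lemma of De Giorgi, and the natural approach is to prove by induction that $Y_n \le c^{-1/\beta} b^{-(\beta+1)/\beta^2} \lambda^{-n}$ for a suitable contraction factor $\lambda>1$, so that $Y_n\to 0$ geometrically. First I would guess the correct $\lambda$: plugging the ansatz $Y_n = A\lambda^{-n}$ into the recursion $Y_{n+1}\le cb^n Y_n^{1+\beta}$ suggests we want $A\lambda^{-(n+1)} \ge cb^n A^{1+\beta}\lambda^{-n(1+\beta)}$, i.e. $\lambda^{-1} \ge cb^n A^\beta \lambda^{n\beta}$. For this to hold for all $n$ with the $n$-dependence cancelling, one is led to take $\lambda = b^{1/\beta}$ and to require $A^\beta \le c^{-1} \lambda^{-1-\beta} = c^{-1} b^{-(1+\beta)/\beta}$, which is exactly the hypothesis $Y_1 \le c^{-1/\beta} b^{-(\beta+1)/\beta^2}$ rewritten as $Y_1 \le A$ with $A = c^{-1/\beta} b^{-(\beta+1)/\beta^2}$.

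The key steps, in order, are: (i) set $A := c^{-1/\beta} b^{-(\beta+1)/\beta^2}$ and $\lambda := b^{1/\beta} > 1$, and observe the base case $Y_1 \le A = A\lambda^{-1}\cdot\lambda$, so in fact it is cleaner to prove the slightly shifted claim $Y_n \le A \lambda^{-(n-1)}$ for all $n\ge 1$, whose base case $Y_1\le A$ is precisely the hypothesis; (ii) assume $Y_n \le A\lambda^{-(n-1)}$ and substitute into the recursion to get $Y_{n+1}\le cb^n \big(A\lambda^{-(n-1)}\big)^{1+\beta} = c b^n A^{1+\beta}\lambda^{-(n-1)(1+\beta)}$; (iii) use $A^\beta = c^{-1}b^{-(1+\beta)/\beta} = c^{-1}\lambda^{-(1+\beta)}$ to rewrite $cA^{1+\beta} = A\cdot cA^\beta = A\lambda^{-(1+\beta)}$, so that $Y_{n+1}\le A\, b^n \lambda^{-(1+\beta)}\lambda^{-(n-1)(1+\beta)} = A\, b^n \lambda^{-n(1+\beta)}$; (iv) finally note $b^n = \lambda^{n\beta}$, hence $b^n\lambda^{-n(1+\beta)} = \lambda^{-n}$, giving $Y_{n+1}\le A\lambda^{-n} = A\lambda^{-((n+1)-1)}$, which closes the induction; (v) since $\lambda>1$, conclude $Y_n\to 0$.

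There is essentially no serious obstacle here — the only mildly delicate point is bookkeeping the exponents correctly (the $-1$ shift in $\lambda^{-(n-1)}$ and the identity $A^\beta = c^{-1}\lambda^{-(1+\beta)}$), and checking that the induction is genuinely self-improving rather than merely self-consistent, which it is because equality holds throughout the estimate at the threshold value $Y_1 = A$ and the map $t\mapsto cb^n t^{1+\beta}$ is increasing. One should also remark that the hypothesis $Y_1 \le c^{-1/\beta}b^{-(\beta+1)/\beta^2}$ is used only once, at the base of the induction, and that any smaller starting value works a fortiori by monotonicity.
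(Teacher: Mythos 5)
Your induction is correct and is the standard De Giorgi ``fast convergence'' argument; note that the paper deliberately omits the proof of this lemma and refers to DiBenedetto's book, so there is no in-paper proof to compare against. The exponent bookkeeping checks out: with $A=c^{-1/\beta}b^{-(\beta+1)/\beta^2}$ and $\lambda=b^{1/\beta}$ one has $cA^{\beta}=\lambda^{-(1+\beta)}$ and $b^{n}=\lambda^{n\beta}$, so $Y_{n}\leq A\lambda^{-(n-1)}$ propagates exactly as you write.

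One caveat worth making explicit: your conclusion in step (v) needs $\lambda>1$, i.e.\ $b>1$, whereas the statement as printed only assumes $b>0$. For $b\leq 1$ the lemma is actually false as stated (e.g.\ $\beta=1$, $c=1$, $b=1/4$, $Y_{1}=16$ gives $Y_{n}=4^{n+1}\to\infty$ with equality in the recursion, and for $b=1$ the constant sequence $Y_{n}=c^{-1/\beta}$ is admissible). So you should state $b>1$ as a hypothesis rather than assert $b^{1/\beta}>1$ silently; this is harmless for the paper, since in both places the lemma is invoked one has $b=16$ or $b=4$.
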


\begin{lemma}\label{poincare type}
The following Poincaré-type inequality holds on any domain $\Omega$ in $\R^d$ with a universal constant (depending on the dimension $d$ only): if $u\in H^1_0(\Omega)$, then
$$\int_\Omega u^2 \leq C\left|\{u\neq 0\}\right|^{2/d}\int_\Omega |\nabla u|^2.$$
\end{lemma}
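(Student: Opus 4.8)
The plan is to deduce this from the classical Sobolev embedding of $H^1_0$ combined with H\"older's inequality restricted to the support set $\{u\neq 0\}$. First I would observe that, since $u\in H^1_0(\Omega)$, extending $u$ by zero produces a function in $H^1(\R^d)$ whose gradient is supported (up to null sets) in $\{u\neq 0\}$; hence it is enough to prove the inequality for a compactly supported $u\in H^1(\R^d)$, with $\Omega$ replaced by $\R^d$. If $|\{u\neq 0\}|=+\infty$ or $\int|\nabla u|^2=+\infty$ the estimate is trivial, and if $\int|\nabla u|^2=0$ then $u$ is constant, hence identically $0$; so we may assume all quantities finite and $u\not\equiv 0$.

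For $d\geq 3$, set $2^{*}=\tfrac{2d}{d-2}$. The Gagliardo--Nirenberg--Sobolev inequality gives $\|u\|_{L^{2^{*}}(\R^d)}\leq C_d\,\|\nabla u\|_{L^2(\R^d)}$ with $C_d$ depending only on $d$. Writing $u^2=u^2\,\mathbf{1}_{\{u\neq 0\}}$ and applying H\"older with exponents $2^{*}/2$ and its conjugate, whose conjugate exponent satisfies $1-\tfrac{2}{2^{*}}=\tfrac{2}{d}$, one obtains
$$\int_{\R^d}u^2\;\leq\;\|u\|_{L^{2^{*}}}^2\,\big|\{u\neq 0\}\big|^{2/d}\;\leq\;C_d^2\,\big|\{u\neq 0\}\big|^{2/d}\int_{\R^d}|\nabla u|^2,$$
which is the claim with $C=C_d^2$.

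For $d=2$ the Sobolev exponent $2^{*}$ degenerates, so I would instead use the $W^{1,1}$ form of the Sobolev inequality, $\|u\|_{L^2(\R^2)}\leq C\|\nabla u\|_{L^1(\R^2)}$ (the case $d=2$ of $\|u\|_{L^{d/(d-1)}}\leq C\|\nabla u\|_{L^1}$), and then Cauchy--Schwarz to localize the $L^1$ norm of the gradient: $\|\nabla u\|_{L^1}=\int_{\{u\neq 0\}}|\nabla u|\leq\big|\{u\neq 0\}\big|^{1/2}\|\nabla u\|_{L^2}$. Combining the two and squaring yields exactly $\int u^2\leq C^2\big|\{u\neq 0\}\big|\int|\nabla u|^2$, which is the case $d=2$. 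The case $d=1$ is elementary: on each interval component $I_k$ of the open set $\{u\neq 0\}$ one has $|u|\leq\int_{I_k}|u'|\leq|I_k|^{1/2}\|u'\|_{L^2(I_k)}$, so $\int_{I_k}u^2\leq|I_k|^2\int_{I_k}|u'|^2$, and summing over $k$ using $|I_k|\leq|\{u\neq 0\}|$ gives the bound with exponent $2=2/d$.

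There is no real obstacle here: the statement is a standard consequence of Sobolev embedding, and the only point requiring a little care is that $2^{*}$ is unavailable when $d\leq 2$, which forces the slightly different route through the $W^{1,1}$ Sobolev inequality in dimension two --- precisely the dimension of interest for the rest of the paper. Since every constant used (the Sobolev constant on $\R^d$ and the H\"older/Cauchy--Schwarz constants, all equal to $1$) depends on $d$ alone, the resulting $C$ is universal in the claimed sense.
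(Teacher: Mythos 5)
Your proof is correct, and it is the standard argument for this De Giorgi-type lemma (Sobolev embedding of $H^1_0$ followed by H\"older's inequality on the set $\{u\neq 0\}$, with the usual $W^{1,1}$ detour when $d=2$); the paper itself omits the proof and simply refers to DiBenedetto's book and De Giorgi's original paper, where essentially this same argument appears. The only point worth making explicit is the fact you use in dimension two, namely that $\nabla u=0$ a.e.\ on $\{u=0\}$ for a Sobolev function, but this is standard and you do flag it.
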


If the previous inequality comes from an idea by De Giorgi, the following is very very standard:
\begin{lemma}\label{poincare non so}
If a shape $\Omega_1$ is fixed, then for any rescaled open set $\Omega_R=R\Omega_1\subset\R^d$ the following Poincaré-type inequality holds with a constant $C$ depending on $\Omega_1$ and on the ratio $\lambda$:
if $u\in H^1(\Omega_R)$ and $|\{u=0\}|\geq \lambda |\Omega_R|$, then
$$\int_{\Omega_R}u^2 \leq CR^2\int_{\Omega_R} |\nabla u|^2.$$
\end{lemma}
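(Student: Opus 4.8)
The plan is to reduce to the case $R=1$ by a scaling argument and to prove that case by a standard compactness-and-contradiction argument à la De Giorgi--Rellich.

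\emph{Scaling reduction.} Given $u\in H^1(\Omega_R)$ with $|\{u=0\}|\geq\lambda|\Omega_R|$, set $\tilde u(y):=u(Ry)$ for $y\in\Omega_1$. Then $\tilde u\in H^1(\Omega_1)$, and since $\{ \tilde u=0\}=R^{-1}\{u=0\}$ we have $|\{\tilde u=0\}|\geq\lambda|\Omega_1|$. A change of variables gives $\int_{\Omega_R}u^2=R^d\int_{\Omega_1}\tilde u^2$ and $\int_{\Omega_R}|\nabla u|^2=R^{d-2}\int_{\Omega_1}|\nabla\tilde u|^2$. Hence, once the inequality $\int_{\Omega_1}w^2\leq C\int_{\Omega_1}|\nabla w|^2$ is established for every $w\in H^1(\Omega_1)$ with $|\{w=0\}|\geq\lambda|\Omega_1|$, applying it to $w=\tilde u$ and multiplying by $R^d$ yields exactly $\int_{\Omega_R}u^2\leq CR^2\int_{\Omega_R}|\nabla u|^2$, with the same constant $C=C(\Omega_1,\lambda)$.

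\emph{The case $R=1$.} Suppose the claim fails: there is a sequence $(u_n)_n$ in $H^1(\Omega_1)$ with $\int_{\Omega_1}u_n^2=1$, $\int_{\Omega_1}|\nabla u_n|^2\to 0$ and $|\{u_n=0\}|\geq\lambda|\Omega_1|$. Then $(u_n)$ is bounded in $H^1(\Omega_1)$, so by the Rellich theorem, up to a subsequence, $u_n\to u$ strongly in $L^2(\Omega_1)$ and $\nabla u_n\deb\nabla u$ weakly in $L^2(\Omega_1)$. Passing to the limit we get $\nabla u=0$ and $\int_{\Omega_1}u^2=1$; since all the shapes $\Omega_1$ that we use (balls, half-balls, crowns) are connected, $u$ is a nonzero constant.

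\emph{Passing the measure constraint to the limit} is the only genuinely non-routine point. The indicator functions $\chi_n:=I_{\{u_n=0\}}$ satisfy $0\leq\chi_n\leq 1$, so up to a further subsequence $\chi_n\destar\chi$ in $L^\infty(\Omega_1)$ with $0\leq\chi\leq 1$ and $\int_{\Omega_1}\chi=\lim_n\int_{\Omega_1}\chi_n\geq\lambda|\Omega_1|$. On the other hand $u_n\chi_n\equiv 0$ while $u_n\to u$ strongly in $L^2$ and $\chi_n\destar\chi$ weakly-$*$; the product of a strongly and a weakly-$*$ convergent sequence converges weakly in $L^1$, so $u\chi=0$ a.e. in $\Omega_1$. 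Since $u$ is a nonzero constant this forces $\chi=0$ a.e., contradicting $\int_{\Omega_1}\chi\geq\lambda|\Omega_1|>0$. This proves the case $R=1$, and with the scaling step above, the lemma.
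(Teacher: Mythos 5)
Your proof is correct. The paper itself omits the proof of this lemma (it is listed among the ``very classical lemmas'' whose proofs are deferred to DiBenedetto's book), so there is nothing to compare against line by line; your two-step argument --- scaling to $R=1$, then a compactness/contradiction argument via Rellich, with the measure constraint passed to the limit through weak-$*$ convergence of the indicators $I_{\{u_n=0\}}$ and the identity $u_n I_{\{u_n=0\}}\equiv 0$ --- is the standard and complete way to establish it. The only implicit hypotheses are that $\Omega_1$ is a bounded connected extension domain (so that Rellich applies and $\nabla u=0$ forces $u$ constant), which holds for the balls and annuli actually used in the paper; note only that the compactness route yields a constant $C(\Omega_1,\lambda)$ that is not explicit, which is all the statement requires.
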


\begin{lemma}\label{reduction from above}
Suppose $f\in L^{2+\ve}$, and let $v$ be a subsolution of 
$$\nabla\cdot (a(x) \cdot \nabla v)=f'I_{v>0},$$ 
where the matrix $a$ satisfies $\bar{c}I_d\leq a\leq \bar{C}I_d$.
Then there exists a constant $\nu_0$ such that, if $0\leq v\leq M$ in $B_R$ and
$$|\{v>(1-\alpha)M\}\cap B_R|<\nu_0|B_R|,$$
then $v\leq (1-\alpha/2)M$ in $B_{R/2}$, provided $M\geq R^{\frac{\ve}{2+\ve}}$. The constant $\nu_0$ only depends on the dimension, on $\alpha$, on $\ve$ and on the bounds and on the ellipticity of $a$ (i.e. on $\bar{c}$ and $ \bar{C}$). \end{lemma}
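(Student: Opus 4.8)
The plan is to run a standard De Giorgi iteration on the level sets of $v$ from above, combining the Caccioppoli inequality for the subsolution $v$ with the energy/measure-decay lemmas quoted above (Lemma~\ref{algebra} and Lemma~\ref{poincare type}). First I would set up the truncations: for $n\geq 0$ let
$$k_n=(1-\alpha/2)M+\frac{\alpha}{2^{n+1}}M,\qquad R_n=\frac R2\Bigl(1+\frac1{2^n}\Bigr),$$
so that $k_n\downarrow(1-\alpha/2)M$, $R_n\downarrow R/2$, $k_0=(1-\alpha/2)M+\alpha M/2$ is close to $(1-\alpha)M$ wait—I should take $k_0=(1-\alpha)M$ as the starting level and let $k_n$ increase toward $(1-\alpha/2)M$; either way the two endpoints are $(1-\alpha)M$ and $(1-\alpha/2)M$ and successive radii shrink from $R$ to $R/2$. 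Write $w_n=(v-k_n)_+$ and $A_n=\{v>k_n\}\cap B_{R_n}$, and define the normalized quantity
$$Y_n=\frac{|A_n|}{|B_{R_n}|}.$$
The goal is to show $Y_n\to0$, which forces $v\leq(1-\alpha/2)M$ on $B_{R/2}$.

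The key step is the Caccioppoli estimate. Testing the (sub)equation $\nabla\cdot(a\nabla v)=f'I_{v>0}$ against $\zeta^2 w_n$ with a cutoff $\zeta$ equal to $1$ on $B_{R_{n+1}}$, supported in $B_{R_n}$, with $|\nabla\zeta|\lesssim 2^n/R$, and using the ellipticity bounds $\bar c I_d\le a\le\bar C I_d$, gives
$$\int_{B_{R_{n+1}}}|\nabla w_n|^2\;\lesssim\;\frac{4^n}{R^2}\int_{A_n}w_n^2\;+\;\int_{A_n}|f'|\,w_n.$$
Here I must be careful: $f'$ is only a distribution, so rather than integrating $f'w_n$ I would move the derivative back onto the test function, writing the right-hand side contribution as $\int f\,\partial_1(\zeta^2 w_n)$, and bound it via Hölder with the $L^{2+\ve}$ norm of $f$ — this is exactly where the hypothesis $f\in L^{2+\ve}$ and the threshold $M\ge R^{\ve/(2+\ve)}$ enter, producing a term that, after normalization, is comparable to $M^2 4^n R^{-2}$ times a positive power of $Y_n$. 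Since $w_n\le \alpha M/2^{n+1}\le M$ on $A_n$ and $|A_n|\le|A_{n-1}|$, the first term is controlled by $M^2 4^n R^{-2}|A_n|$. Next I apply Lemma~\ref{poincare type} to $w_{n+1}\in H^1_0(B_{R_n})$: since on $A_{n+1}$ we have $w_n\ge k_{n+1}-k_n=\alpha M/2^{n+2}$, Chebyshev gives $|A_{n+1}|\le (2^{n+2}/\alpha M)^2\int w_n^2$, and combining with $\int_{B_{R_n}}w_{n+1}^2\le C|A_{n+1}|^{2/d}\int|\nabla w_{n+1}|^2$ and the Caccioppoli bound yields, after dividing by $|B_{R_{n+1}}|$,
$$Y_{n+1}\;\le\;C\,b^n\,Y_n^{1+2/d}$$
for explicit $C=C(d,\alpha,\ve,\bar c,\bar C)$ and $b=b(d,\alpha)>1$; crucially all powers of $R$ and $M$ cancel thanks to the scaling $M\ge R^{\ve/(2+\ve)}$.

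Finally, Lemma~\ref{algebra} with $\beta=2/d$ applies: it says $Y_n\to0$ provided the initial value $Y_0$ is below a threshold $\nu_0:=C^{-d/2}b^{-d^2/4\cdot\,\text{(stuff)}}$ depending only on $d,\alpha,\ve,\bar c,\bar C$; this is precisely the hypothesis $|\{v>(1-\alpha)M\}\cap B_R|<\nu_0|B_R|$. Then $Y_n\to0$ gives $|\{v>(1-\alpha/2)M\}\cap B_{R/2}|=0$, i.e. $v\le(1-\alpha/2)M$ a.e. on $B_{R/2}$, which is the claim. The main obstacle I anticipate is the careful bookkeeping of the forcing term: since only $f$ (not $f'$) lies in $L^{2+\ve}$, one has to integrate by parts against the test function and then check that the resulting exponents of $R$ and $M$ combine with the condition $M\ge R^{\ve/(2+\ve)}$ to make the iteration inequality scale-invariant — everything else is the textbook De Giorgi scheme.
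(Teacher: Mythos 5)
Your proposal is correct and follows essentially the same De Giorgi iteration as the paper: same truncation levels between $(1-\alpha)M$ and $(1-\alpha/2)M$, same Caccioppoli estimate with the derivative moved off $f'$ onto the test function, same use of Lemma~\ref{poincare type} plus Chebyshev, and the same role for the threshold $M\geq R^{\ve/(2+\ve)}$. The only slip is the exponent in the final recursion: the forcing term only yields $Y_n^{1+\ve/(2+\ve)}$ (not $Y_n^{1+2/d}$), so one must run Lemma~\ref{algebra} with $\beta=\ve/(2+\ve)$ after downgrading the cutoff term accordingly (using $Y_n\leq 4$), exactly as the paper does; this changes nothing in the conclusion.
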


\begin{proof}
Set
$$k_n=M\left(1-\frac \alpha 2 - \frac{\alpha}{2^n}\right),\quad R_n=\frac R 2 + \frac{R}{2^n},\quad A_n=\left\{v>k_n\right\}\cap B_{R_n},$$
let $\eta_n$ be a cutoff function which vanishes outside $B_{R_n}$ and equals $1$ on $B_{R_{n+1}}$, with gradient bounds $|\nabla \eta_n|\leq CR^{-1}2^n$.
The function $\phi_n:=(v-k_n)_+$ is a subsolution of 
$$\nabla\cdot (a(x) \cdot \nabla \phi_n)=f'I_{v>k_n},$$ 
and we can test the equation against $\phi_n\eta_n^2$.
We get
$$\bar{c}\int |\nabla\phi_n|^2\eta_n^2 \leq \int <a\nabla\phi_n,\nabla\phi_n>\eta_n^2\leq  \bar{C}\int|\nabla\phi_n||\nabla\eta_n|\phi_n\eta_n +\int f' I_{v>k_n}\left(\phi_n\eta_n^2\right).$$
The first term in the right hand side may be estimated by $\frac{\bar{c}}{4} \int  |\nabla\phi_n|^2\eta_n^2+ \frac{ \bar{C}}{\bar{c}} \int \phi_n^2|\nabla\eta_n|^2$; in the second one can get rid of $I_{v>k_n}$ because anyway $\phi_n$ vanishes on $\{v\leq k_n\}$ and integrate by parts, so as to obtain
$$\int f \frac{\partial}{\partial x_1}\left(\phi_n\eta_n^2\right)\leq \int f|\nabla\phi_n|\eta_n^2+2\int f\phi_n\eta_n|\nabla\eta_n|\leq\frac{\bar{c}}{4}  \int  |\nabla\phi_n|^2\eta_n^2+ C\int f^2\eta_n^2+C\int \phi_n^2|\nabla\eta_n|^2.$$

Putting together the estimates one gets
$$\int |\nabla\phi_n|^2\eta_n^2 \leq C\left(\int \phi_n^2|\nabla\eta_n|^2+\int f^2\eta_n^2\right).$$
We then apply the estimate of Lemma \ref{poincare type} to the function $u=\phi_n\eta_n$ and get
$$\int \phi_n^2\eta_n^2 \leq C|A_n|\left(\int \phi_n^2|\nabla\eta_n|^2+\int |\nabla\phi_n|^2\eta_n^2\right)\leq  C|A_n|\left(\int \phi_n^2|\nabla\eta_n|^2+\int f^2\eta_n^2\right).$$
In the last expression, we may estimate $\phi$ with $M$ (actually $M(1-\alpha)$ would be sufficient), $|\nabla\eta_n|$ with $C2^n/R$, and apply H\"older inequality to $f$ and $I_{A_n}$. On the other hand, the values of $ \phi_n^2\eta_n^2$ may be estimated from below by $(k_{n+1}-k_n)I_{A_{n+1}}$. As a consequence, we get
$$(k_{n+1}-k_n)^2|A_{n+1}|\leq C\left[M^2|A_n|R^{-2}4^n+\left(\int f^{2+\ve}\right)^{\frac{2}{2+\ve}}|A_n|^{\frac{\ve}{2+\ve}}\right].$$
We define $Y_n$ as $|A_n|/R^2$ (i.e., up to possible bounded factors, the ratio between the area of $A_n$ and the ball itself). We have
$$\alpha^2M^24^{-n}Y_{n+1}\leq  CM^2Y_n\left[Y_n4^n+||f||_{L^{2+\ve}}^2Y_n^{\frac{\ve}{2+\ve}}M^{-2}R^{\frac{2\ve}{2+\ve}}\right].$$
Replacing $Y_n$ with $Y_n^{\frac{\ve}{2+\ve}}$ (since $Y_n\leq 4$), and supposing $M\geq R^{\frac{\ve}{2+\ve}}$, one gets an estimate of the form
$$Y_{n+1}\leq C(d, \bar{C},  \bar{c}, ||f||_{L^{2+\ve}},\alpha)Y_n^{1+\frac{\ve}{2+\ve}}16^n.$$
If one supposes $Y_1\leq \nu_0$, where $\nu_0$ is given by the assumptions of Lemma \ref{algebra}, we get $Y_n\to 0$. This means that the measure $|\{v>(1-\alpha/2)M\}\cap B_{R/2}|$ is zero, which is the thesis.
\end{proof}

\begin{lemma}\label{cerchio con valori alti} 
Let $0\leq u\leq M$ be a function in $H^1(B_R)$, assume that $|\{u > 3/4M \}\cap B_R|\geq\nu_0|B_R|$, and set $\ve_0=\sqrt{\nu_0/2}$, then one of the following alternatives happens:
\begin{itemize}
\item either the Dirichlet energy of $u$ in $B_R\setminus B_{\ve_0 R}$ is large: $\int_{B_R\setminus B_{\ve_0 R}} |\nabla u|^2\geq c M^2$,
\item or there exists a radius $s\in [\ve_0 R,R]$ such that $u >\frac 58 M$ on $\partial B_{s}$.
\end{itemize}
\end{lemma}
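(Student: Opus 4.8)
The plan is to prove the statement by contradiction with the second alternative: I assume that for every $s\in[\ve_0 R,R]$ the circle $\partial B_s$ carries at least one point where $u\le\tfrac58 M$, and I deduce that the first alternative holds. The mechanism is typical of two dimensions: the Dirichlet energy picked up on each circle, once integrated over a set of radii of length of order $R$, yields a quantity of order $M^2$ with no residual power of $R$.

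First I would use the specific choice $\ve_0=\sqrt{\nu_0/2}$, which in $\R^2$ gives $|B_{\ve_0 R}|=\ve_0^2|B_R|=\tfrac{\nu_0}{2}|B_R|$. Since the superlevel set $\{u>\tfrac34 M\}$ has measure at least $\nu_0|B_R|$ in $B_R$ and at most $\tfrac{\nu_0}{2}|B_R|$ inside $B_{\ve_0 R}$, it occupies measure at least $\tfrac{\nu_0}{2}|B_R|=\tfrac{\nu_0}{2}\pi R^2$ in the annulus $B_R\setminus B_{\ve_0 R}$. Writing this in polar coordinates, $\int_{\ve_0 R}^R\huno\big(\{u>\tfrac34 M\}\cap\partial B_s\big)\,ds\ge\tfrac{\nu_0}{2}\pi R^2$; since $\huno(\partial B_s)\le 2\pi R$, a Chebyshev-type argument produces a set $G\subset[\ve_0 R,R]$ with $|G|\ge\tfrac{\nu_0}{8}R$ such that $\huno\big(\{u>\tfrac34 M\}\cap\partial B_s\big)\ge\tfrac{\nu_0}{4}\pi R>0$ for every $s\in G$. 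Since $u\in H^1(B_R)$, the restriction of $u$ to $\partial B_s$ is a well-defined absolutely continuous function for a.e. $s$, so I may intersect $G$ with this full-measure set of radii without changing the bound $|G|\ge\tfrac{\nu_0}{8}R$.

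Next I fix $s\in G$ and consider the trace $w$ of $u$ on $\partial B_s$, parametrised by arc length over a period of length $L=2\pi s$. On one hand $w>\tfrac34 M$ on a set of positive $\huno$-measure, so $\sup w\ge\tfrac34 M$; on the other hand, by the contradiction hypothesis, $w\le\tfrac58 M$ somewhere, so $\inf w\le\tfrac58 M$, and therefore $\osc w\ge\tfrac18 M$. A periodic absolutely continuous function has total variation at least twice its oscillation, so $\int_0^L|w'|\,dt\ge\tfrac14 M$, and Cauchy--Schwarz gives $\int_{\partial B_s}|\nabla_\tau u|^2\,d\huno=\int_0^L|w'|^2\,dt\ge\tfrac1L\big(\tfrac14 M\big)^2=\tfrac{M^2}{16L}\ge\tfrac{M^2}{32\pi R}$, where $\nabla_\tau$ denotes the tangential derivative. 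Integrating over $G$ and using $|\nabla u|^2\ge|\nabla_\tau u|^2$ together with the polar coordinate formula, $\int_{B_R\setminus B_{\ve_0 R}}|\nabla u|^2\ge\int_G\tfrac{M^2}{32\pi R}\,ds\ge\tfrac{M^2}{32\pi R}\cdot\tfrac{\nu_0}{8}R=\tfrac{\nu_0}{256\pi}M^2$, which is the first alternative with $c=\nu_0/(256\pi)$.

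The only genuinely delicate point is the one already flagged: for an $H^1$ function both alternatives have to be read through the good one-dimensional representative of $u$ on $\partial B_s$, which exists only for a.e. $s$; the negation of ``$u>\tfrac58 M$ on $\partial B_s$'' must therefore be applied only on the full-measure set of radii where it is meaningful, which is harmless since that set can be absorbed into $G$ at no cost. Everything else — the measure splitting between $B_{\ve_0R}$ and the annulus, the Chebyshev selection of $G$, the one-dimensional Poincar\'e/Cauchy--Schwarz estimate on each circle, and the final integration — is routine.
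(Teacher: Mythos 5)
Your proof is correct and follows essentially the same strategy as the paper: isolate a set of radii of measure comparable to $R$ in the annulus whose circles meet $\{u>\tfrac34 M\}$, use the negation of the second alternative to force an oscillation of at least $\tfrac18 M$ on each such circle, apply Cauchy--Schwarz circle by circle, and integrate in $s$. Your Chebyshev selection of $G$ and the explicit attention to the a.e.\ radii where the trace is absolutely continuous are slightly more careful than the paper's direct estimate on the set $X$ of radii whose circles contain a point of the superlevel set, but this is a technical refinement, not a different argument.
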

\begin{proof}

Thanks to the value we have chosen for $\ve_0$, the set $\{u>\frac 34M\}\cap (B_R\setminus B_{\ve_0R})$ must include a large part of $B_R\setminus B_{\ve_0 R}$ and in particular we have$| \{u>3/4M\}\cap (B_R\setminus B_{\ve_0R})|\geq\frac{\nu_0}{2}|B_R|$.

Let us define a subset $X$ of $[\ve_0R,R]$ through $X=\{s\in [\ve_0R,R]\;:\;\exists x\in\partial B_s\;:\;u(x)\geq \frac 34 M\}$. We obviously have 
$$2\pi R|X|\geq\int_X 2\pi s ds \geq |\{u> \frac 34M\}\cap (B_R\setminus B_{\ve_0R})|\geq\frac{\nu_0}{2}|B_R|$$
which gives $|X|\geq \frac{\nu_0}{4}R$.
Now, either the second alternative is verified or for every $s\in X$ there exists a point $x$ on the circle $\partial B_s$ such that $u(x)<\frac 58 M$. On each one of these circles we would get an oscillation of at least $M/8$ and hence
$$\frac M8\leq \int_{\partial B_s}|\nabla u|d\huno\leq (2\pi s)^{1/2}\left(\int_{\partial B_s}|\nabla u|^2d\huno\right)^{1/2},$$
which implies 
$$\int_{\partial B_s}|\nabla u|^2d\huno\geq \frac{M^2}{128\pi R}$$
and hence
$$\int_{B(x_0, R)\setminus B(x_0,\ve_0 R)} |\nabla u|^2\geq \int_X ds \int_{\partial B_s}|\nabla u|^2d\huno\geq \frac{M^2}{128\pi R}|X|\geq C\nu_0M^2.\qedhere$$
\end{proof}

\noindent From now on, we will denote by $\ve_0$ the constant defined in Lemma \ref{cerchio con valori alti} obtained for $\alpha=1/4$ and $\bar{c}=c_\delta$ (i.e. $\ve_0=\sqrt{\nu_0/2}$ where $\nu_0$ is obtained from Lemma \ref{reduction from above}).

\begin{lemma}\label{da poinnonso}
Let $u$ be in $H^1(B_R\setminus B_{\varepsilon_0R})$ and assume that  both the measures $|\{u > aM \}\cap (B_R\setminus B_{\varepsilon_0R})|$ and $|\{u < bM \}\cap (B_R\setminus B_{\varepsilon_0R})|$ are larger than $\eta R^2$, then 
$$\int_{B_R\setminus B_{\ve_0 R}}|\nabla u|^2\geq c(\eta, \ve_0,a, b)M^2.$$
\end{lemma}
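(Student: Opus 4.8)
The plan is to reduce Lemma~\ref{da poinnonso} to the standard Poincar\'e-type inequality on an annulus, namely Lemma~\ref{poincare non so} applied with the fixed shape $\Omega_1 = B_1\setminus B_{\ve_0}$, combined with a truncation argument. The point is that the two hypotheses say that $u$ is ``often large'' and ``often small'' on the annulus $A_R := B_R\setminus B_{\ve_0 R}$, so after shifting and clipping we produce a function which vanishes on a fixed fraction of $A_R$ but whose $L^2$-norm is bounded below by a multiple of $M$, and then the Poincar\'e inequality forces the Dirichlet energy to be large.

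Concretely, without loss of generality assume $a > b$ (otherwise the statement is trivial, since if $a\le b$ the two sets cover too much — or simply there is nothing to prove if the hypotheses are vacuous; in any case we only use them when $a>b$). Set $m := \frac{a+b}{2}M$ and consider the truncated function $w := \min\{(u - mM\cdot\tfrac{\,}{\,})_+, \tfrac{a-b}{2}M\}$ — more cleanly, let $w := \big(\min\{u, aM\} - m\big)_+$ wait, let me instead take $w := \big(u \wedge (aM)\big)\vee(bM) - bM$, clipped between $0$ and $(a-b)M$. On the set $\{u < bM\}$ we have $w = 0$, and by hypothesis this set has measure $\ge \eta R^2$, i.e.\ at least a fixed fraction $\lambda := \eta/|\Omega_1|$ of $|A_R|$ (recall $|A_R| = (1-\ve_0^2)\pi R^2$). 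On the set $\{u > aM\}$ we have $w = (a-b)M$, and this set also has measure $\ge \eta R^2$. Hence $\int_{A_R} w^2 \ge \eta R^2 \,(a-b)^2 M^2$, while $|\nabla w| \le |\nabla u|$ pointwise. Moreover $|\{w = 0\}\cap A_R| \ge \lambda |A_R|$, so Lemma~\ref{poincare non so} applies on $\Omega_R = A_R = R\,\Omega_1$ and gives
$$\int_{A_R} w^2 \le C R^2 \int_{A_R} |\nabla w|^2 \le C R^2 \int_{A_R}|\nabla u|^2,$$
with $C = C(\Omega_1,\lambda) = C(\ve_0,\eta)$. Chaining the two displayed bounds yields
$$\int_{B_R\setminus B_{\ve_0 R}} |\nabla u|^2 \ge \frac{1}{C R^2}\int_{A_R} w^2 \ge \frac{\eta (a-b)^2}{C}\, M^2 =: c(\eta,\ve_0,a,b)\, M^2,$$
which is the claim.

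There is essentially no hard obstacle here; this is a routine application of a De~Giorgi–type Poincar\'e inequality, and the only points requiring a little care are: (i) making sure Lemma~\ref{poincare non so} is stated and usable for the annular shape $\Omega_1 = B_1\setminus B_{\ve_0}$ (it is, since the lemma is phrased for an arbitrary fixed shape, and the relevant ratio is controlled by $\eta$ and $\ve_0$), and (ii) verifying that $w$ is admissible in $H^1(A_R)$, which is immediate because truncation is a Lipschitz operation and $u\in H^1(A_R)$. One should also note that the constant $c$ genuinely depends on $a,b$ only through $(a-b)^2$ (and degenerates as $a\downarrow b$), which is consistent with the statement. The dependence on $\ve_0$ enters both through the fixed annular shape in the Poincar\'e constant and through the normalization $|A_R| = (1-\ve_0^2)\pi R^2$ relating the absolute measure bound $\eta R^2$ to a fraction of $|A_R|$.
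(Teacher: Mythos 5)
Your proof is correct and is essentially identical to the paper's: the paper uses the same truncation $\phi=(u-bM)_+\wedge (a-b)M$ (which equals your $w$), the same lower bound $\int\phi^2\geq\eta R^2(a-b)^2M^2$ from the two measure hypotheses, and the same application of Lemma \ref{poincare non so} on the annular shape $B_1\setminus B_{\ve_0}$, using that $\phi$ is a $1$-Lipschitz function of $u$ so that $|\nabla\phi|\leq|\nabla u|$.
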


\begin{proof}
Consider the function $\phi=\left(u- bM\right)_+\wedge(a-b)M$. It vanishes on a set whose measure is at least $\eta R^2$ and is maximal and equal to $(a-b)M$ on a set whose measure is, again, at least $\eta R^2$. This implies, thanks to Lemma \ref{poincare non so} applied to $\Omega=B_1\setminus B_{\ve_0}$, the inequality
$$\eta R^2(b-a)^2M^2\leq\int_{B_R\setminus B_{\ve_0 R}} \phi^2\leq C(\eta,\ve_0)R^2\int_{B_R\setminus B_{\ve_0 R}}|\nabla\phi|^2\leq C(\eta,\ve_0)R^2 \int_{B_R\setminus B_{\ve_0 R}}|\nabla u|^2.$$
The last inequality comes from the fact that $\phi$ is a $1-$Lipschitz function of $u$ and the statement is obtained.
\end{proof}

\begin{lemma}
Let $B_R$ and $B_{\ve_0R}$ be two concentric balls in $\Omega$ and $v_\delta=h_{1+\delta}(u_{x_1})$ as defined in Section 2. Then one of the three followings alternatives happens
\begin{itemize}
\item either $\osc(v_\delta,B_{\ve_0 R})\leq \frac 78 \osc(v_\delta,B_{R})$,
\item or the energy of $v_\delta$ in $B_{R}\setminus B_{\ve_0 R}$ is large: 
$$\int_{B_R\setminus B_{\ve_0 R}} |\nabla v_\delta|^2\geq c \left[\osc(v_\delta,B_R)\right]^2,$$
\item or we have $\osc(v_\delta,B_R)\leq R^{\frac{\ve}{2+\ve}}$.
\end{itemize}
\end{lemma}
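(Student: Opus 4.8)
The plan is to carry out the oscillation-decay scheme of DiBenedetto and Vespri. Write $\mu^+=\sup_{B_R}v_\delta$, $\mu^-=\inf_{B_R}v_\delta\ge 0$, $\omega=\mu^+-\mu^-=\osc(v_\delta,B_R)$, and assume the third alternative fails, i.e. $\omega>R^{\frac{\ve}{2+\ve}}$. Set $w:=v_\delta-\mu^-$, so $0\le w\le\omega$ in $B_R$. Since $v>1+\delta$ wherever $v_\delta>0$, one has $|\nabla u|>1+\delta$ and $a=D^2H(\nabla u)\ge c_\delta I_d$ on $\{v_\delta>0\}$; thus $w$ is a non-negative subsolution of an equation whose right-hand side is controlled by $|f'|$ and supported in $\{v_\delta>0\}$, and the proof of Lemma~\ref{reduction from above} applies to it with $\bar c=c_\delta$. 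I also assume, harmlessly, that $\nu_0$ is small enough that $\ve_0=\sqrt{\nu_0/2}\le\tfrac12$.

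First the dichotomy on the top level set. If $|\{w>\tfrac34\omega\}\cap B_R|<\nu_0|B_R|$, then (using $\omega\ge R^{\frac{\ve}{2+\ve}}$) Lemma~\ref{reduction from above} with $\alpha=\tfrac14$ gives $w\le\tfrac78\omega$ on $B_{R/2}\supseteq B_{\ve_0R}$; hence $\sup_{B_{\ve_0R}}v_\delta\le\mu^-+\tfrac78\omega$, and combined with $\inf_{B_{\ve_0R}}v_\delta\ge\mu^-$ this gives $\osc(v_\delta,B_{\ve_0R})\le\tfrac78\omega$ — the first alternative. Otherwise $|\{w>\tfrac34\omega\}\cap B_R|\ge\nu_0|B_R|$, and, exactly as in the proof of Lemma~\ref{cerchio con valori alti}, the choice $\ve_0=\sqrt{\nu_0/2}$ forces $|\{w>\tfrac34\omega\}\cap(B_R\setminus B_{\ve_0R})|\ge\tfrac{\nu_0}{2}|B_R|$, so the super-level set is already fat in the crown.

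Apply now Lemma~\ref{cerchio con valori alti} to $w$ (its $M$ being $\omega$): either $\int_{B_R\setminus B_{\ve_0R}}|\nabla v_\delta|^2\ge c\,\omega^2$ — the second alternative — or there is a radius $s\in[\ve_0R,R]$ with $v_\delta>\mu^-+\tfrac58\omega$ on the whole circle $\partial B_s$. In this ``good circle'' case, note that $v_\delta>0$ on $\partial B_s$, so $\partial B_s\subset\{v_\delta>0\}$, where $v_\delta=v-(1+\delta)$ solves a genuinely uniformly elliptic equation. If $v_\delta>0$ on all of $B_s$, the weak minimum principle for that equation (the $f$-contribution being of lower order, $\lesssim R^{\frac{\ve}{2+\ve}}$, hence negligible since the third alternative fails) gives $v_\delta\ge\mu^-+\tfrac12\omega$ throughout $B_s\supseteq B_{\ve_0R}$, so $\osc(v_\delta,B_{\ve_0R})\le\tfrac12\omega$ — the first alternative. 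If instead $v_\delta$ vanishes somewhere in $B_s$, then $\mu^-=0$, and I split once more: if $|\{v_\delta<\tfrac14\omega\}\cap(B_R\setminus B_{\ve_0R})|\ge\tfrac{\nu_0}{2}|B_R|$, then together with the fat super-level set above, Lemma~\ref{da poinnonso} applied to the crown (shape $B_1\setminus B_{\ve_0}$) gives $\int_{B_R\setminus B_{\ve_0R}}|\nabla v_\delta|^2\ge c\,\omega^2$ — the second alternative; otherwise the set $\{v_\delta<\tfrac14\omega\}$ is thin in the crown, and one has to use the equation satisfied by $v$ itself: the ``wall'' $\{0<v_\delta<\tfrac14\omega\}$ lies in the elliptic region and separates the degenerate core $\{v_\delta=0\}$, reached inside $B_s$, from the bulk $\{v_\delta\ge\tfrac14\omega\}$; the two-dimensional Poincar\'e inequalities (Lemmas~\ref{poincare type} and \ref{poincare non so}) then force this wall to carry a definite amount $\ge c\,\omega^2$ of Dirichlet energy located in the crown, unless the bulk already fills $B_{\ve_0R}$, which again yields $\osc(v_\delta,B_{\ve_0R})\le\tfrac34\omega$.

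The routine ingredients are the two reduction lemmas, the circle-counting built into $\ve_0=\sqrt{\nu_0/2}$, and checking that each step respects the thresholds $R^{\frac{\ve}{2+\ve}}$ and $\|f\|_{L^{2+\ve}}$. The genuine obstacle — already announced in the introduction — is the ``good circle'' case, and within it the situation where $v_\delta$ touches the degenerate set inside $B_s$: there the subsolution property of $v_\delta$ alone is useless (it forbids upward spikes, not downward dips), and one must exploit the true ellipticity of the equation for $v=\partial u/\partial x_1$ on $\{v_\delta>0\}$, that is, the two-dimensional fact that a solution cannot oscillate by $\sim\omega$ across a region thin in the crown without spending $\sim\omega^2$ of Dirichlet energy there.
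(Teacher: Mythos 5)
Your first two steps (the dichotomy via Lemma~\ref{reduction from above} with $\alpha=1/4$, then the good-circle alternative from Lemma~\ref{cerchio con valori alti}) coincide with the paper's proof, and you have correctly identified the crux: in the good-circle case the subsolution property of $v_\delta$ is useless and one must use the genuinely elliptic equation satisfied by $v=\partial u/\partial x_1$ on $\{v_\delta>0\}$. But your handling of that case has a genuine gap. In the sub-case where $v_\delta$ vanishes somewhere in $B_s$ and $\{v_\delta<\omega/4\}$ is thin in the crown, you assert that the ``wall'' $\{0<v_\delta<\omega/4\}$ must carry Dirichlet energy $\ge c\,\omega^2$ \emph{located in the crown} $B_R\setminus B_{\ve_0R}$, unless the bulk fills $B_{\ve_0R}$. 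Neither branch of this alternative need hold: the degenerate core $\{v_\delta=0\}$, together with the transition layer surrounding it, can sit entirely inside $B_{\ve_0R}$. Then whatever energy a capacity/Poincar\'e argument extracts from the transition is spent inside $B_{\ve_0R}$, not in the crown (so the second alternative is not obtained), and the bulk does not fill $B_{\ve_0R}$ (so the oscillation is not reduced either). No lemma in your toolkit localizes that energy to the annulus, and the statement you are proving requires exactly that localization (it is what makes the crowns' contributions disjoint and summable in Theorem~\ref{princ}).

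The paper avoids this case distinction altogether by a truncation trick. Set, on $B_s$ (extended by $0$ outside, which is continuous since $v_\delta\ge\frac58M$ on $\partial B_s$),
$$w=\left(v_\delta-\tfrac58 M\right)_-\wedge \tfrac{3}{16}M,$$
and run a De Giorgi iteration on $w_n=(w-k_n)_+$, $k_n=(1-2^{-(n+1)})\frac M8$, testing the non-uniformly elliptic equation for $u_{x_1}$ against $w_n$. The point of the upper cap at $\frac{3}{16}M$ is that $\nabla w_n\neq 0$ forces $\frac{7}{16}M<v_\delta<\frac58M-k_n$, so the test function's gradient lives entirely in the region where $a\ge c_\delta I_d$; the degenerate core, wherever it is, lies under the cap where $w$ is constant and is invisible to the iteration. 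The resulting dichotomy is on the measure of $\{w>M/16\}\subset B_s$: if it is below the threshold $\nu_1|B_R|$, the iteration gives $w\le M/8$, i.e.\ $v_\delta\ge M/2$ on all of $B_s\supseteq B_{\ve_0R}$ (first alternative — and in particular $v_\delta$ cannot vanish in $B_s$, so your problematic sub-case is vacuous under the small-measure hypothesis); if it is above the threshold, the sub-level set $\{v_\delta<\frac{9}{16}M\}$ is fat, and together with the fat super-level set this feeds into Lemma~\ref{da poinnonso}. Your ``weak minimum principle'' in the sub-case $v_\delta>0$ on $B_s$ is this same mechanism in disguise (and must anyway be made quantitative because of the right-hand side $f'$ with $f\in L^{2+\ve}$ only), but the capped test function is what removes the need to know anything about where $v_\delta$ vanishes.
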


\begin{proof}
Up to a translation, suppose that $0\leq v_\delta\leq M$ in $B_R$, where $M=\osc(v_\delta,B_R)$. In the ball $B_R$ the function $v_\delta$ (actually it is $v_\delta - \inf_{B_R}v_\delta$) is a subsolution of 
$$\nabla\cdot (a(x) \cdot \nabla v)=f'I_{v>0},$$
where the matrix $a$ satisfies $a\geq c_\delta I$ at least whenever $v>0$. Hence, one can replace $a$ with $c_\delta I$ on the set $\{v_\delta=0\}$ and suppose that the equation is uniformly elliptic. We start by supposing that the inequality $M>R^{\frac{\ve}{2+\ve}}$ is satisfied, otherwise the second alternative is realized. Assuming this estimate, one can apply Lemma \ref{reduction from above} with $\alpha=1/4$. We get that either $|\{v_\delta>3/4M\}\cap B_R|\geq\nu_0|B_R|$ or the oscillation on $B_{R/2}$ is reduced by a factor of $7/8$ and the first alternative is realized.

Suppose now that $|\{v_\delta>3/4M\}\cap B_R|\geq\nu_0|B_R|$. Lemma \ref{cerchio con valori alti}  proves that in this case either the third alternative is realized or there exists at least a radius such that $v_\delta$ stays high on a circle between $B_R$ and $B_{\ve_0 R}$.

Hence, suppose now that at least a value $s\in [\ve_0R,R]$ is such that $v_\delta\geq \frac 58 M$ on $\partial B_s$. In such a case, consider the function $w$ given by
$$w(x)=\begin{cases}(v_\delta-\frac 58 M)_-\wedge \frac {3}{16}M&\mbox{ if }x\in B_s,\\
				0&\mbox{ if }x\notin B_s.\end{cases}$$
To get information on the behavior of $w$ one needs to look back at the equation satisfied by the partial derivative $u_x$, instead of the one which has $v_\delta=(u_x-(1+\delta))_+$ as a subsolution. We know that $u_x$ satisfies
$$\nabla\cdot (a(x) \cdot \nabla u_x)=f',$$
which is a non-uniformly elliptic equation, and we test it against the test function $w_n=(w-k_n)_+$, where the numbers $k_n$ are given by $k_n=(1-2^{-(n+1)}) \frac{M}{8}$, for $n\geq 4$. 
We get
$$\int <a\nabla u_x,\nabla w_n>=-\int f\frac{\partial}{\partial x_1} w_n$$
and we notice that, where $\nabla w_n\neq 0$, one has $\nabla w_n=-\nabla u_x$, and also $v_\delta>\frac{7}{16}M>0$, which implies $a\geq c_\delta I$. Hence we have
$$c_\delta\int |\nabla w_n|^2\leq \int <a\nabla w_n,\nabla w_n>\leq \int f|\nabla w_n|\leq \frac{c_\delta}{2}\int |\nabla w_n|^2+\frac{C}{2c_\delta}\int f^2I_{\nabla w_n\neq 0}.$$
This implies, by using also Lemma \ref{poincare type}
$$\int  w_n^2\leq C|A_n|\int |\nabla w_n|^2\leq C|A_n|||f||_{L^{2+\ve}}^2|A_n|^{\frac{\ve}{2+\ve}},$$
where $A_n=\left\{w>k_n\right\}$. With the same kind of estimates as in Lemma \ref{reduction from above}, one gets
$$|k_{n+1}-k_n|^2|A_{n+1}|\leq C|A_n|^{1+\frac{\ve}{2+\ve}} ||f||_{L^{2+\ve}}^2$$
and, setting again $Y_n=A_n R^{-2}$ and using $k_{n+1}-k_n=M2^{-(n+1)}$, this gives
$$Y_{n+1}\leq 4^nC(|f||_{L^{2+\ve}},c_\delta,\bar{C})\frac{R^{\frac{2\ve}{2+\ve}}}{M^2}Y_n^{1+\frac{\ve}{2+\ve}}\leq 4^nC(|f||_{L^{2+\ve}},c_\delta,\bar{C})Y_n^{1+\frac{\ve}{2+\ve}}.$$
The last inequality comes from the assumption $M>R^{\frac{\ve}{2+\ve}}$.
Again, if one supposes $Y_0\leq \nu_1$ (where $\nu_1$ is given by Lemma \ref{algebra}, as in Lemma \ref{reduction from above}), one gets $Y_n\to 0$, which means that $w\leq \frac M8$ on $B_s$, provided $|\{w>\frac{M}{16}\}|<\nu_1|B_R|$. 

This means that there are two remainding cases: either the condition on the measure is fullfilled, or not. In the first case the oscillation has gone down to $\frac 78 M$ on $B_s$ and hence we can say that the oscillation on $B_{\ve_0 R}$ is less than $7/8$ of that on $B_R$ and the first alternative is realized.  

In the second case, we may estimate again the energy thanks to Lemma \ref{da poinnonso}. In this last case it is the third alternative which is realized.
\end{proof}

The conclusion of all these lemmas is a continuity result on $v_\delta$.

\begin{theorem}\label{princ}
Let $u$ be a solution of $\nabla\cdot (F(\nabla u))=f$, with $f\in L^{2+\ve}$ in dimension $d=2$, and suppose that $v_\delta=h_{1+\delta}(u_{x_1})_+\in H^1\cap L^\infty$ and that $D^2H(\nabla u)\leq \bar{C}I_d$ (which is always true if $H\in C^2$ and $\nabla u\in L^\infty$). Then $v_\delta$ is continuous on $\Omega$ and its modulus of continuity gives $|v_\delta(x)-v_\delta(y)|\leq C\left(\ln |x-y|\right)^{-1/2}$ for any $x,y\in \Omega_0\subset\Omega$, the constant $C$ depending on $||f||_{L^{2+\ve}}$, $||v_\delta||_{L^\infty},$ $||v_\delta||_{H^1}$, $c_\delta$, $\bar{C}$, $d(\Omega_0 ,\partial\Omega)$.
\end{theorem}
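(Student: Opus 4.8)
The plan is to iterate the previous lemma (the trichotomy) on a dyadic sequence of balls shrinking around a fixed point $x_0\in\Omega_0$, and to use the three alternatives to force the oscillation to decay. Fix $x_0$ and let $R_0=\tfrac12 d(\Omega_0,\partial\Omega)$, and set $R_j=\ve_0^{\,j}R_0$, so that the balls $B_{R_j}=B(x_0,R_j)$ are nested and contained in $\Omega$. Write $\omega_j=\osc(v_\delta,B_{R_j})$ and $E_j=\int_{B_{R_j}\setminus B_{R_{j+1}}}|\nabla v_\delta|^2$. Because the crowns $B_{R_j}\setminus B_{R_{j+1}}$ are pairwise disjoint, $\sum_j E_j\le \|\nabla v_\delta\|_{L^2(\Omega)}^2=:\mathcal E<\infty$, which is precisely where the $H^1$ hypothesis enters.

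Applying the lemma on each pair $(B_{R_j},B_{R_{j+1}})$ — after noting that the constant $\ve_0$ was fixed once and for all (for $\alpha=1/4$, $\bar c=c_\delta$) so the rescaling from $B_R$ to $B_{\ve_0R}$ is legitimate — we are in one of three situations for each $j$: (i) $\omega_{j+1}\le\tfrac78\omega_j$; (ii) $E_j\ge c\,\omega_j^2$; or (iii) $\omega_j\le R_j^{\,\ve/(2+\ve)}$. The point is that (iii) is a terminal, good situation: once it occurs, $\omega_j$ (hence $\omega_{j'}$ for all $j'\ge j$ since oscillation is monotone in the ball) is bounded by $R_j^{\,\ve/(2+\ve)}$, which is polynomially small in $R_j$ and far better than the logarithmic rate we are after, so we may restrict attention to the range of $j$ where (iii) fails. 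For such $j$, every step is either a geometric decrease (i) or an energy deposit (ii) of size at least $c\,\omega_j^2$. Split the indices $j\in\{0,1,\dots,N\}$ accordingly into $G$ (geometric steps) and $B$ (big-energy steps). If $j\in B$ then $\omega_j^2\le E_j/c$, and since the $E_j$ sum to at most $\mathcal E$, at most $\mathcal E/(c\,\omega_N^2)$ of the indices in $B$ can have $\omega_j\ge\omega_N$; but $\omega_j\ge\omega_N$ for all $j\le N$ by monotonicity, so $|B\cap\{0,\dots,N\}|\le \mathcal E/(c\,\omega_N^2)$. On the other hand each geometric step multiplies the oscillation by $7/8$, so $\omega_N\le\omega_0\,(7/8)^{|G\cap\{0,\dots,N\}|}$, i.e. $|G\cap\{0,\dots,N\}|\le \log(\omega_0/\omega_N)/\log(8/7)$.

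Combining, $N+1=|G|+|B|\le C\log(\omega_0/\omega_N)+C\,\mathcal E/\omega_N^2$. Since $\omega_0\le\|v_\delta\|_{L^\infty}$ is bounded and $\omega_N\le\omega_0$, the logarithmic term is dominated (up to constants and additive terms) by the second one when $\omega_N$ is small, so $N\le C/\omega_N^2$, i.e.
$$\omega_N\le \frac{C}{\sqrt N}=\frac{C}{\sqrt{\log_{1/\ve_0}(R_0/R_N)}}\le C\,\bigl(\ln(1/R_N)\bigr)^{-1/2}$$
for $R_N$ small. For an arbitrary small radius $r$, choosing $N$ with $R_{N+1}\le r\le R_N$ and using monotonicity gives $\osc(v_\delta,B(x_0,r))\le C(\ln(1/r))^{-1/2}$ with $C$ depending only on $\|f\|_{L^{2+\ve}}$, $\|v_\delta\|_{L^\infty}$, $\|v_\delta\|_{H^1}$, $c_\delta$, $\bar C$ and $d(\Omega_0,\partial\Omega)$ (through $R_0$ and through the constants $c,\ve_0,\nu_0$ of the lemmas). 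In particular $v_\delta$ has this modulus of continuity at every $x_0\in\Omega_0$; a standard argument turns the pointwise oscillation bound into the stated two-point estimate $|v_\delta(x)-v_\delta(y)|\le C(\ln|x-y|)^{-1/2}$.

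The main obstacle is the bookkeeping that turns the trichotomy into the $(\ln)^{-1/2}$ rate: one must verify that the energy budget $\sum E_j\le\mathcal E$ genuinely limits the number of "big-energy" steps at each oscillation level — this is the place where the two-dimensional scaling is essential, since it is exactly in $d=2$ that the lower bound $c\,\omega_j^2$ in alternative (ii) is $R$-independent and therefore summable against a finite Dirichlet energy. A secondary point requiring care is handling alternative (iii) uniformly (checking it is genuinely absorbing and contributes a better-than-logarithmic bound once triggered) and making sure the constants produced by Lemmas \ref{reduction from above}, \ref{cerchio con valori alti} and \ref{da poinnonso}, together with $\ve_0$, are indeed independent of the scale $R_j$ so that the same lemma applies at every step of the iteration.
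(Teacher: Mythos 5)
Your iteration scheme is the same as the paper's: apply the trichotomy lemma on the nested balls $B(x_0,\ve_0^{\,j}R_0)$, use the disjointness of the crowns to budget the Dirichlet energy against $\|v_\delta\|_{H^1}^2$, and count how many steps of each type can occur before the oscillation must drop. Your counting of the geometric steps and of the big-energy steps is correct and reproduces the paper's bounds $\omega_N\le C\|v_\delta\|_{L^\infty}/|G|$ and $\omega_N\le C\|v_\delta\|_{H^1}/\sqrt{|B|}$.

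The gap is in the treatment of alternative (iii). It is neither ``terminal'' nor ``absorbing'': $\omega_j\le R_j^{\ve/(2+\ve)}$ does not imply $\omega_{j+1}\le R_{j+1}^{\ve/(2+\ve)}$, since the threshold shrinks with the radius while the oscillation need not. More importantly, the information it provides is small relative to $R_j$, not relative to the final radius $R_N$: if (iii) occurs only at, say, $j=0$, all you learn is $\omega_N\le R_0^{\ve/(2+\ve)}$, a fixed constant, which says nothing about the decay at scale $R_N$. Consequently your identity $N+1=|G|+|B|$ fails whenever (iii) occurs at some step, and the inequality $N+1\le C\log(\omega_0/\omega_N)+C\,\mathcal{E}/\omega_N^2$ does not follow. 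The repair is short and is exactly what the paper does: let $T$ be the set of indices where (iii) holds, so that $N+1=|G|+|B|+|T|$; the last index $i\in T$ satisfies $i\ge |T|-1$, so monotonicity of the oscillation gives $\omega_N\le\omega_i\le\left(R_0\ve_0^{\,|T|-1}\right)^{\ve/(2+\ve)}\le C(R_0,\ve_0,\ve)/|T|$, using the elementary bound $\ve_0^{\,t}\le C/t$. Then by pigeonhole one of $|G|$, $|B|$, $|T|$ is at least $(N+1)/3$, and each of the three estimates yields $\omega_N\le C/\sqrt{N}\sim C|\ln R_N|^{-1/2}$. With this correction your argument coincides with the paper's proof.
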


\begin{proof}
Take a sequence of balls $B_n$ included in $\Omega$, of the form $B_n=B(x_0,\ve_0^nR_0)$, ($R_0\leq d(\Omega_0,\partial\Omega)$ for $n=1,\dots, N$. We call $R$ the last radius, i.e. $R=R_0\ve_0^N$, and hence 
$$N=\frac{\ln(R/R_0)}{\ln\ve_0}.$$
Each time we pass from $B_n$ to $B_{n+1}$ either the oscillation is diminished by a factor $7/8$, or the oscillation was already smaller than a power $\beta=\ve/(2+\ve)$ of the radius, or the energy in the crown is controlled from below. Let us denote by $j,\,k$ and $h$ the number of times the three situations happen. We have $j+k+h=N$. If we call $M$ as before the oscillation of $v_\delta$ on $B_N$ we have the three following informations:
\begin{itemize}
\item $M(\frac{8}{7})^j\leq ||v_\delta||_{L^\infty}$, since the oscillation on $B_0$ cannot be more than the maximal value of $v_\delta$, which is positive, and it has diminished $j$ times; we can also write $M\frac j7\leq  ||v_\delta||_{L^\infty}$, since $(\frac{8}{7})^j=(1+\frac 17)^j\geq 1+\frac j 7\geq \frac j7$;
\item $M\leq (R_0\ve_0^i)^\beta\leq (R_0\ve_0^k)^\beta$, where $i$ is the last index of the second type (and hence one can say that $\ve_0^i\leq \ve_0^k$ since $i\geq k$); here we can write $M\leq C(R_0,\beta,\ve_0)/k$, thanks to the elementary inequality $\ve^{t}\leq C(\ve)/t$, which is valid for $\ve<1$ and $t\geq 0$.
\item $cM^2h\leq ||v_\delta||_{H^1}^2$, since on each crown $B_i\setminus B_{i+1}$ where the third alternative is realized we had a contribution proportional to the squared oscillation (and hence at least $M^2$) to the Dirichlet energy of $v_\delta$,
\end{itemize}
This implies the following bounds on $M$:
$$M\leq \min\left\{\frac{7||v_\delta||_{L^\infty}}{j}\,;\,\frac{||v_\delta||_{H^1}^2}{\sqrt{h}}\,;\,\frac{C(R_0,\beta,\ve_0)}{k}\right\}\leq \frac{\max\{7||v_\delta||_{L^\infty};\,||v_\delta||_{H^1}^2;\,C(R_0,\beta,\ve_0)\}}{\max\{\sqrt{j},\sqrt{h},\sqrt{k}\}}.$$
At least one of the three indices $j,k$ and $h$ is larger than $N/3$, and hence one gets
$$M\leq C(||v_\delta||_{L^\infty},||v_\delta||_{H^1}, R_0,\ve_0,\beta) \frac{1}{\sqrt{N}}=C(||v_\delta||_{L^\infty},||v_\delta||_{H^1}, R_0,\beta,\ve_0) \frac{\sqrt{|\ln\ve_0|}}{\sqrt{|\ln R|}},$$
which gives a logarithmic modulus of continuity for $v_\delta$.  This constant degenerates as the ellipticity constant $c_\delta$ goes to zero, since $\ve_0=\ve_0(c_\delta,\bar{C},\beta)$. The result is local and becomes worse when approaching the boundary, due to the dependence on $R_0$ (which can be taken as large as $d(\Omega_0,\partial\Omega)$).
\end{proof}

\section{From $F(\nabla u)$ to $v_\delta$}
In this section we show a useful and clarifying lemma, which aims at giving the relation between $F(\nabla u)$ and $v_\delta$.

\begin{lemma}\label{lipschitz}
For any $a\in\R^d$ consider all the points $z$ such that $F(z)=a$ and set
$$\gamma_\delta(a):= (z_1-(1+\delta))_+=h_{1+\delta}(z_1),$$ 
where $z_1$ denotes the first component of $z$. 

The function $\gamma_\delta$ is well-defined (i.e. if several $z$ satisfy $F(z)=a$ the value of $h_{1+\delta}(z_1)$ is the same for all of them), and it is Lipschitz continuous with a Lipschitz constant that does not exceed $c_\delta^{-1}$.
\end{lemma}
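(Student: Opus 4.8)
The plan is to exploit that $F=\nabla H$ is the gradient of a convex function, so that $F$ is a (maximal) monotone map, together with the quantitative lower bound \eqref{ass on H} on $D^2H$ outside the closed ball $\overline{B_{1+\delta}}$. First I would establish that $\gamma_\delta$ is well-defined: suppose $F(z)=F(z')=a$. By monotonicity of $F$ one has $\langle F(z)-F(z'),z-z'\rangle=0$. Writing this along the segment $z_t=(1-t)z'+tz$ and using $F(z)-F(z')=\int_0^1 D^2H(z_t)(z-z')\,dt$, convexity gives $\langle D^2H(z_t)(z-z'),z-z'\rangle=0$ for a.e. $t$. Now either the whole segment lies in $\overline{B_{1+\delta}}$, in which case both $z_1$ and $z_1'$ are $\leq 1+\delta$ and $h_{1+\delta}(z_1)=h_{1+\delta}(z_1')=0$; or some sub-arc of the segment lies in $\{|z|>1+\delta\}$, where $D^2H\geq c_\delta I_d>0$ forces $z-z'$ to vanish on that sub-arc, hence $z=z'$. (A cleaner route: the set $K_a:=F^{-1}(a)$ is convex, being $\partial H^*(a)$, and $H$ is affine on $K_a$; assumption \eqref{ass on H} forbids any segment in $K_a$ from entering $\{|z|>1+\delta\}$, so $K_a$ either is a subset of $\overline{B_{1+\delta}}$ — where $\gamma_\delta\equiv 0$ — or is a singleton.) Either way $\gamma_\delta(a)$ does not depend on the chosen preimage.

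For the Lipschitz bound, fix $a,b\in\R^d$ and pick preimages $z\in F^{-1}(a)$, $w\in F^{-1}(b)$. The goal is $|h_{1+\delta}(z_1)-h_{1+\delta}(w_1)|\leq c_\delta^{-1}|a-b|$. Since $h_{1+\delta}$ is $1$-Lipschitz it suffices, when the left side is nonzero, to bound the relevant difference of first coordinates. The key case is when both $z$ and $w$ lie in $\{|z|\geq 1+\delta\}$ (if one of them, say $w$, is inside, replace it by the point where the segment $[z,w]$ last meets the sphere $\{|\zeta|=1+\delta\}$; this only decreases $w_1$ towards — and, using the previous paragraph's rigidity, consistently with — the value $1+\delta$, while the corresponding value of $F$ stays on the segment $[a,b]$ by monotone-operator arguments, costing nothing). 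In the good case, monotonicity plus \eqref{ass on H} along the segment gives
$$\langle a-b,z-w\rangle=\int_0^1\langle D^2H(z_t)(z-w),z-w\rangle\,dt\geq c_\delta|z-w|^2,$$
so by Cauchy–Schwarz $|z-w|\leq c_\delta^{-1}|a-b|$, and in particular $|z_1-w_1|\leq c_\delta^{-1}|a-b|$, whence $|h_{1+\delta}(z_1)-h_{1+\delta}(w_1)|\leq |z_1-w_1|\leq c_\delta^{-1}|a-b|$.

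The main obstacle is the bookkeeping in the reduction to the ``good case'': when a preimage lies inside $\overline{B_{1+\delta}}$ one cannot apply \eqref{ass on H} directly, and one must check that sliding the point to the sphere along the connecting segment (i) keeps its first coordinate on the correct side of $1+\delta$ so that the $h_{1+\delta}$-values are not disturbed, and (ii) keeps control of how far $F$ of the slid point is from $b$. A robust way to handle this without tracking $F$ explicitly is to treat $\gamma_\delta$ as a function of $a$ through $H^*$: $\gamma_\delta(a)=h_{1+\delta}\big((\nabla H^*(a))_1\big)$ wherever $H^*$ is differentiable, which is a.e., and on that full-measure set one can use that $\nabla H^*$ restricted to the region where $|\nabla H^*(a)|>1+\delta$ is $c_\delta^{-1}$-Lipschitz (it is the inverse of the uniformly monotone map $F|_{\{|z|>1+\delta\}}$), while on the complement $\gamma_\delta$ vanishes; a short continuity/density argument then upgrades the a.e. Lipschitz estimate to the global one. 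I would present the segment argument as the main line and mention the $H^*$ viewpoint as the conceptual explanation.
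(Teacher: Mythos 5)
Your well-definedness argument is fine (the paper gets it only as a byproduct of its Lipschitz estimate, so making it explicit via the convexity of $F^{-1}(a)=\partial H^*(a)$ is a reasonable, slightly different presentation). The problem is in the Lipschitz bound. Your ``good case'' is ``both $z$ and $w$ lie in $\{|\zeta|\geq 1+\delta\}$'', and there you write
$$\langle a-b,z-w\rangle=\int_0^1\langle D^2H(z_t)(z-w),z-w\rangle\,dt\geq c_\delta|z-w|^2.$$
This inequality is false in general: the exterior of the ball $B_{1+\delta}$ is not convex, so the segment $z_t$ can cross $B_{1+\delta}$, where assumption \eqref{ass on H} gives no lower bound on $D^2H$ (for $H=H_{(q)}$ the Hessian vanishes identically on $B_1$). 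Taking $z,w$ near the sphere and nearly antipodal, the fraction of the segment on which $D^2H\geq c_\delta I_d$ is available tends to $0$, so no constant of the form $C c_\delta$ survives this argument. The same flaw undermines your $H^*$ fallback: $F$ restricted to the non-convex set $\{|\zeta|>1+\delta\}$ is not uniformly monotone in the two-point sense, so you cannot conclude that its inverse is $c_\delta^{-1}$-Lipschitz by this route. Your reduction step for the mixed case (sliding $w$ along $[z,w]$ to the sphere) is also unjustified: $F$ of the slid point need not lie on the segment $[a,b]$, and you yourself flag that you cannot control it.

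The paper avoids all of this by splitting cases according to the \emph{first coordinate} $z_1$, not the norm $|z|$. If $z_1,w_1>1+\delta$, the whole segment $[z,w]$ lies in the half-space $\{\zeta_1\geq 1+\delta\}$, which is convex and contained in $\{|\zeta|\geq 1+\delta\}$, so the strong monotonicity inequality $\langle F(z)-F(w),z-w\rangle\geq c_\delta|z-w|^2$ is legitimate there. In the mixed case $z_1\leq 1+\delta<w_1$ the paper introduces the point $\tilde z$ where the line through $z$ and $w$ meets the hyperplane $\{\zeta_1=1+\delta\}$ and combines three monotonicity inequalities (strong one on $[\tilde z,w]$, plain convexity on $[z,\tilde z]$, and collinearity of $z-\tilde z$ with $\tilde z-w$) so that the unknown value $F(\tilde z)$ cancels, yielding $w_1-(1+\delta)\leq|\tilde z-w|\leq c_\delta^{-1}|a-b|$ directly. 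To repair your proof you would need to replace your norm-based case split by this coordinate-based one (or supply some genuinely new argument handling segments that cross the degenerate ball).
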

\begin{proof}
Consider points $a,b,z,w\in\R^d$ so that $F(z)=a$ and $F(w)=b$.
Suppose $z_1,w_1>1+\delta$: in this case one has, from $F=\nabla H$ and from the quantified convex behavior of $H$ on the half-space $\{z_1\geq 1+\delta\}$,
$$<F(z)-F(w),z-w>\geq c_\delta |z-w|^2,$$
which implies $|z-w|\leq c_\delta^{-1}|a-b|$ (this implies in particular uniqueness of $z$ and $w$). Then one can compose with $h_{1+\delta}$, which is $1-$Lipschitz, and get 
$$|h_{1+\delta}(z_1)-h_{1+\delta}(w_1)|\leq |z_1-w_1|\leq |z-w|\leq c_\delta^{-1}|a-b|.$$

In the case $z_1,w_1\leq 1+\delta$ one trivially has $h_{1+\delta}(z_1)=h_{1+\delta}(w_1)=0$. The only remaining case is (by simmetry) $z_1\leq1+\delta$ and $w_1>1+\delta$. In this case there is a unique point $\tilde{z}$ on the line through $z$ and $w$ lying on the hyperplane $\{z_1=1+\delta\}$.  Hence one may write
\begin{eqnarray*}
<F(\tilde{z})-F(w),\tilde{z}-w>&\geq& c_\delta |\tilde{z}-w|^2,\\
<F(z)-F(\tilde{z}),z-\tilde{z}>&\geq& 0,\\
<F(z)-F(\tilde{z}),\tilde{z}-w>&\geq& 0.
\end{eqnarray*}
The second inequality comes form simple convexity of $H$ and the last one from the second, since $z-\tilde{z}$ and $\tilde{z}-w$ share the same direction. Hence, summing up the first and the third, one gets
$$<F(z)-F(w),\tilde{z}-w>\geq c_\delta |\tilde{z}-w|^2,$$
which gives 
$$w_1-(1+\delta)\leq |\tilde{z}-w|\leq c_\delta^{-1}|a-b|,$$
and, again, the Lipschitz result.
\end{proof}

As we briefly addressed in Section 2, this lemma implies that $v_\delta$ is the composition of $F(\nabla u)$ with a Lipschitz function (the Lipschitz constant only depending on $\delta$). This is useful in several contexts. First of all, we need $v_\delta$ being a function (not necessarily Lipschitz) of  $F(\nabla u)$ if we tacitly approximate the equation, since we already underlined that the solutions of the approximated problem may converge to a specific solution $u$ of the limit one, but $F(\nabla u)$ is the same for all solutions. Thanks to this lemma, the same is true for $v_\delta$. Then, this lemma also proves that $v_\delta$ has the same Sobolev regularity of  $F(\nabla u)$, and that its $H^1$ norm only depends on that of $F(\nabla u)$ and on $c_\delta$. In particular, it does not depend on the direction that we choose for differentiating $u$ (that we conventionally called $x_1$ but that could have been any other direction). This will be useful in the next section, where we will deduce results on $|\nabla u|$ starting from $v_\delta$ (using the arbitrariness of the direction and the equicontinuity of all the functions $v_\delta$).

\section{Continuity of functions of $\nabla u$}

We have proven quantified continuity results for $v_\delta=h_{1+\delta}(\partial u /\partial x_1 )$. Just changing the variable one differentiate with respect to, one gets the same results for every function of the form $h_{1+\delta}(\nabla u\cdot e),$ for $|e|=1$. Since these functions all share the same modulus of continuity, taking the supremum over $e$ one has the same result for $h_{1+\delta}(|\nabla u|)=(|\nabla u|-(1+\delta))_+$.

Finally, if one lets $\delta\to 0$, one gets continuity for $(|\nabla u|-1)_+$ and for $(\nabla u\cdot e-1)_+$ as uniform limits of continuous functions. The modulus of continuity depends on the dependence of the modulus of continuity previously obtained with respect to $\delta$. We did not explicitly accounted for these continuity moduli but it could have been possible. Anyway, one can say that the continuity of $(|\nabla u|-1)_+$ and for $(\nabla u\cdot e-1)_+$ only depends on the constants $(c_\delta)_{\delta>0}$, on the upper bounds on the Hessian of $H$ (in the applications these usually depend on the maximum of $|\nabla u|$) and on the $L^{2+\ve}$ norm of $f$. All the results are local.

By composition, it is not difficult to prove continuity for functions of the form $g(|\nabla u|)$, for any real continuous function $g$ satisfying $g(t)=0$ for $t\leq 1$. 

To allow for more general functions and in particular for vector functions of $\nabla u$, we present the following lemma:
\begin{lemma}\label{gz}
Let $z:\Omega_0\to\R^d$ be a bounded function such that the family of functions  $h_1(z\cdot e)$ is equicontinuous for $e\in\R^d$ with $|e|=1$. Let $g:\R^d\to\R^d$ be a continuous function with $g=0$ on $B_1$. Then $g(z)$ is continuous on $\Omega_0$ and its modulus of continuity only depends on that of $g$, on $||z||_{L^\infty}$ and on the common modulus of continuity of the functions $h_1(z\cdot e)$.
\end{lemma}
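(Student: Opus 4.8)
The plan is to reduce continuity of $g(z)$ to continuity of $z$ \emph{on the open set where $|z|>1$} together with the fact that $g$ vanishes near $\partial B_1$ from inside. First I would observe that the hypothesis on the family $h_1(z\cdot e)$ controls $z$ only "from above" in each direction: knowing $h_1(z\cdot e)=(z\cdot e-1)_+$ is continuous does not by itself say $z$ is continuous (indeed $z$ need not be, inside $\{|z|\le 1\}$). So the key reduction is: for a point $x_0$ with $|z(x_0)|=:m$, if $m\le 1$ then by taking $e=z(x_0)/m$ (or any $e$ if $m=0$) one gets $h_1(z(x_0)\cdot e)=0$, and the common modulus of continuity of the $h_1(z\cdot e)$ forces $h_1(z(y)\cdot e)$, hence $z(y)\cdot e$, to stay below $1+\omega_0(|y-x_0|)$ for $y$ near $x_0$; since this holds for \emph{every} unit $e$ (with a modulus independent of $e$), one concludes $|z(y)|\le 1+\omega_0(|y-x_0|)$ for $y$ near $x_0$, i.e. $z$ is "upper semicontinuous for the norm" with a uniform modulus. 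Then uniform continuity of $g$ together with $g\equiv 0$ on $B_1$ gives: $g(z(y))$ is close to $0=g(z(x_0))$ whenever $|y-x_0|$ is small, because $z(y)$ lies in the slightly enlarged ball $B_{1+\omega_0(|y-x_0|)}$ where $g$ is small (by uniform continuity of $g$, since $g=0$ on $B_1$).

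Next I would handle the remaining case $m=|z(x_0)|>1$. Here I want genuine continuity of $z$ at $x_0$. Pick $e_0=z(x_0)/m$ so that $z(x_0)\cdot e_0=m>1$, and note $h_1(z\cdot e_0)$ is continuous at $x_0$ with value $m-1>0$; hence there is a neighborhood of $x_0$ on which $z(y)\cdot e_0 > 1$ and in fact $z(y)\cdot e_0\to m$. Now I would recover all the other components of $z(y)$ using directions of the form $e=e_0\cos\theta+ e'\sin\theta$ for small $\theta$ and $e'\perp e_0$: for $y$ close enough to $x_0$ one has $z(y)\cdot e\ge z(y)\cdot e_0\cos\theta - |z||\sin\theta| >1$ (using $z(y)\cdot e_0$ close to $m>1$ and $|z|\le \|z\|_{L^\infty}$ bounded, choosing $\theta$ small), so $h_1(z(y)\cdot e)=z(y)\cdot e-1$ is governed by the common modulus of continuity; comparing the values at $y$ and at $x_0$ for finitely many such $e$ spanning $\R^d$ yields that each component of $z(y)$ converges to the corresponding component of $z(x_0)$, with a modulus depending only on $\omega_0$, $\|z\|_{L^\infty}$ and $m$ — and, since we only need this locally and $g$ is uniformly continuous, the dependence on $m$ can be absorbed. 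Composing with the (uniformly) continuous $g$ finishes this case.

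Finally I would assemble the two cases into one modulus. The cleanest way is: given the target accuracy $\rho>0$ for $|g(z(x))-g(z(y))|$, use uniform continuity of $g$ to get $\rho'$ such that $|a-b|\le\rho'$, $b\in\overline{B_1}$ $\Rightarrow |g(a)-g(b)|\le\rho$ (note $g$ is bounded on bounded sets since continuous, and we only ever evaluate $g$ on the compact ball $\overline{B_{\|z\|_{L^\infty}}}$, on which $g$ is uniformly continuous; the enlargement $B_{1+\omega_0}$ also stays in this compact ball once $|x-y|$ is small). Then in the case $|z(x)|\le 1$ the bound $|z(y)|\le 1+\omega_0(|x-y|)$ together with $g=0$ on $B_1$ gives $|g(z(y))|\le\rho$ as soon as $\omega_0(|x-y|)\le\rho'$ and symmetrically $|g(z(x))|\le\rho$ if also $|z(y)|\le 1$; if instead $|z(y)|>1$ we are in the mixed case but then $1<|z(y)|\le 1+\omega_0(|x-y|)$ is itself tiny so $g(z(y))$ is again within $\rho$ of $g(z(x))=0$. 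And in the case where both $|z(x)|,|z(y)|>1$, or one of them is $>1$ "by a definite amount", the componentwise continuity from the previous paragraph applies directly. Quantifying, the resulting modulus of continuity of $g(z)$ is a composition of $\omega_0$, the modulus of continuity of $g$ on $\overline{B_{\|z\|_{L^\infty}}}$, and elementary functions of $\|z\|_{L^\infty}$, as claimed.

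\textbf{Main obstacle.} The delicate point is the \emph{transition region} where $|z(x_0)|$ is exactly $1$ or barely above it: there the "upper control" argument and the "componentwise recovery" argument must be glued, and one has to be careful that the neighborhood on which directions $e$ near $z(y)/|z(y)|$ still satisfy $z(y)\cdot e>1$ does not shrink to nothing. This is exactly where $g\equiv0$ on $B_1$ saves the day — precisely when we lose control of the direction of $z$ (i.e.\ near the sphere), the value of $g$ is forced to be near $0$ anyway, so no genuine continuity of $z$ is needed there. Making this trade-off quantitative, uniformly over unit vectors $e$, is the crux; the rest is routine $\varepsilon$--$\delta$ bookkeeping.
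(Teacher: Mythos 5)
Your plan is correct and rests on the same two pillars as the paper's proof: (i) $(|z|-1)_+$ inherits the common modulus $\omega_0$ as a supremum of the equicontinuous family, and (ii) a dichotomy between points where $|z|$ is close to $1$ (there $g\equiv 0$ on $B_1$ plus uniform continuity of $g$ makes $g(z)$ small without any continuity of $z$ itself) and points where $|z|-1$ is bounded below (there one upgrades the directional information to genuine continuity of $z$ and composes with $g$). The difference is in how the second regime is handled. You reconstruct all components of $z(y)$ by testing with a cone of tilted directions $e=e_0\cos\theta+e'\sin\theta$, which forces $\theta\lesssim(|z(x_0)|-1)/\|z\|_{L^\infty}$ and hence a modulus for $z$ of order $\omega_0/\theta$, so an extra factor $1/\ve$ appears that must then be reabsorbed by choosing the threshold of the dichotomy as a function of $\ve$; this works, but the bookkeeping at the transition (which you correctly flag as the crux) is heavier. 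The paper instead uses only the single direction $e=a/|a|$ with $a=z(x)$: from $\bigl||a|-|b|\bigr|<\ve$ and $b\cdot\tfrac{a}{|a|}>|a|-\ve$ the identity $|a-b|^2=|a|^2+|b|^2-2a\cdot b$ gives directly $|a-b|^2\leq\ve(4\|z\|_{L^\infty}+\ve)$, with no dependence on $|a|-1$ and no need for a basis of directions; the price is only a square root in the final modulus, $|g(a)-g(b)|\leq\omega_g(\sqrt{C\ve})$. So your route is viable but the paper's one-direction law-of-cosines estimate is the shortcut worth knowing: it collapses your "componentwise recovery" and most of your gluing into one line.
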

\begin{proof}
First remark that, as we already pointed out, then the function $(|z|-1)_+$ as well is continuous, with the same modulus of continuity, as a supremum of a family of equicontinuous functions. 

Now, fix $\ve>0$ and let $\delta=\delta(\ve)>0$ be the corresponding $\delta$ given by the equicontinuity of all the functions $h_1(z\cdot e)$ and $h_1(|z|)$. Take two points $x,y\in\Omega_0$ with $|x-y|<\delta$ and set $a=z(x)$ and $b=z(y)$. 

We distinguish two cases: either $|a|\leq 1+2\ve$ or not. If yes, then we have $g(a)\leq \omega(2\ve)$, where $\omega:\R^+\to\R^+$ is the modulus of continuity of $g$, and
$$\left|(|a|-1)_+-(|b|-1)_+\right|<\ve,\quad \mbox{with }\;(|a|-1)_+\leq2\ve,$$
which implies $(|b|-1)_+\leq 3\ve$ and $|b|\leq 1+3\ve$ and $g(b)\leq \omega(3\ve)$. Finally, $|g(a)-g(b)|\leq \omega(2\ve)+\omega(3\ve)$.

If not, then we have $(|a|-1)_+=|a|-1>2\ve$ and $\left|(|a|-1)_+-(|b|-1)_+\right|<\ve$ implies $(|b|-1)_+>\ve$ and $(|b|-1)_+=|b|-1$. This gives $||a|-|b||<\ve$. In this case we want to estimate $|a-b|$: we use the continuity of $(z\cdot e-1)_+$ with $e=a/|a|$. We get
$$\left||a|-1-\left(b\cdot\frac{a}{|a|}-1\right)_+\right|<\ve$$
since, again, we have $|a|-1>2\ve$, then $(b\cdot\frac{a}{|a|}-1)_+=b\cdot\frac{a}{|a|}-1>\ve$ and hence
$$\left||a| - b\cdot\frac{a}{|a|}\right|<\ve,\quad\mbox{ and thus }\quad b\cdot\frac{a}{|a|}>|a|-\ve.$$
Then we write
\begin{eqnarray*}
|a-b|^2=|a|^2+|b|^2-2a\cdot b&<&|a|^2+|b|^2-2|a|(|a|-\ve)=2|a|\ve+|b|^2-|a|^2\\
&=&2|a|\ve+\left(|b|+|a|\right)\left(|b|-|a|\right)\leq\ve\left(2|a|+|a|+|b|\right)\leq \ve\left(4|a|+\ve\right).
\end{eqnarray*}
Since $z$ was supposed to be bounded, we can assume $|a|\leq M$ and hence $|a-b|^2\leq C\ve$. In this case we get $|g(a)-g(b)|\leq \omega(\sqrt{C\ve})$.

In all the cases we get continuity of $g\circ z$.
\end{proof}

The following theorem is just a corollary of Lemma \ref{gz}, Lemma \ref{lipschitz} and Theorem \ref{princ}.

\begin{theorem}
Let $u$ be a solution of $\nabla\cdot (F(\nabla u))=f$, in dimension $d=2$, where $F=\nabla H$ and $H$ is a convex function satisfying the assumption of Section 2, with $D^2H$ bounded on bounded sets. Suppose $f\in L^{2+\ve}$ and that $F(\nabla u)\in H^1\cap L^\infty$. Then $g(\nabla u)$ is continuous on $\Omega$ for any continuous function $g:\R^2\to\R$ such that $g=0$ on $B_1$.
\end{theorem}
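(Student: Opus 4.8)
The plan is to deduce the final theorem as a routine combination of the three preceding results, essentially assembling the pieces that Sections 3--5 have already prepared. First I would observe that the hypotheses of the theorem are exactly what is needed to invoke the earlier machinery: $F=\nabla H$ with $H$ convex satisfying the key assumption \eqref{ass on H}, $D^2H$ bounded on bounded sets, $f\in L^{2+\ve}$, and $F(\nabla u)\in H^1\cap L^\infty$. From $F(\nabla u)\in L^\infty$ and the boundedness of $D^2H$ on bounded sets one gets $D^2H(\nabla u)\leq \bar{C}I_d$ on the relevant region; however, one should be slightly careful here, since $F(\nabla u)\in L^\infty$ does not immediately give $\nabla u\in L^\infty$ (because $F$ is constant, equal to $0$, on $B_1$). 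The right way around this is to note that $v_\delta = h_{1+\delta}(u_{x_1})$ is, by Lemma \ref{lipschitz}, the composition of $F(\nabla u)$ with a Lipschitz function of constant $\leq c_\delta^{-1}$; hence $v_\delta\in H^1\cap L^\infty$ with norms controlled by $\|F(\nabla u)\|_{H^1\cap L^\infty}$ and $c_\delta$, and moreover on the set $\{v_\delta>0\}$ one has $|\nabla u|>1+\delta$, which is a bounded distance from $0$ only in the wrong direction — but in fact on $\{u_{x_1}>1+\delta\}$ the relevant Hessian bound $D^2H(\nabla u)\leq\bar C I_d$ that enters Theorem \ref{princ} is only used where $v_\delta>0$ and there one can also bound $|\nabla u|$ from above using the $L^\infty$ bound on $F(\nabla u)$ together with the coercivity \eqref{ass on H}. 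So the upper Hessian bound needed in Theorem \ref{princ} holds with a constant depending only on $\|F(\nabla u)\|_{L^\infty}$, $c_\delta$ and the local bound on $D^2H$.

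Next I would apply Theorem \ref{princ}: for each fixed $\delta>0$ and each unit vector $e$, the function $h_{1+\delta}(\nabla u\cdot e)$ is continuous on $\Omega$ with modulus of continuity $C_\delta|\ln|x-y||^{-1/2}$, where $C_\delta$ depends on $\|f\|_{L^{2+\ve}}$, $\|F(\nabla u)\|_{L^\infty}$, $\|F(\nabla u)\|_{H^1}$, $c_\delta$, $\bar C$ and the distance to $\partial\Omega$ — crucially \emph{not} on $e$, because by Lemma \ref{lipschitz} the $H^1$ and $L^\infty$ norms of $h_{1+\delta}(\nabla u\cdot e)$ are bounded independently of $e$. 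Thus for fixed $\delta$ the family $\{h_{1+\delta}(\nabla u\cdot e)\}_{|e|=1}$ is equicontinuous. As explained in Section 5, taking the supremum over $e$ gives continuity of $h_{1+\delta}(|\nabla u|)=(|\nabla u|-(1+\delta))_+$ with the same modulus, and then letting $\delta\to 0$ gives — as a uniform limit of continuous functions on any $\Omega_0\Subset\Omega$ — continuity of $(|\nabla u|-1)_+$ and of $(\nabla u\cdot e - 1)_+$ for every unit $e$. In particular the family $\{h_1(\nabla u\cdot e)\}_{|e|=1}$ is equicontinuous and $\nabla u$ is such that $(|\nabla u|-1)_+$ is continuous and bounded (boundedness of $(|\nabla u|-1)_+$ again from the $L^\infty$ bound on $F(\nabla u)$ via \eqref{ass on H}).

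Finally I would invoke Lemma \ref{gz} with $z=\nabla u$ (restricted to any fixed $\Omega_0\Subset\Omega$, where $z$ is bounded): the family $h_1(z\cdot e)$ is equicontinuous by the previous step, $z$ is bounded, and $g:\R^2\to\R$ is continuous with $g=0$ on $B_1$. Lemma \ref{gz} is stated for vector-valued $g$, but a scalar-valued $g$ is a trivial special case, so it applies verbatim. The conclusion is that $g(\nabla u)$ is continuous on $\Omega_0$, and since $\Omega_0\Subset\Omega$ was arbitrary, $g(\nabla u)$ is continuous on all of $\Omega$. This finishes the proof.

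The only genuinely delicate point — and the place I expect a careful reader to linger — is the passage to the limit $\delta\to 0$: one must check that the constants $C_\delta$ from Theorem \ref{princ}, although they blow up as $c_\delta\to 0$ (equivalently as $\delta\to 0$), still yield a \emph{uniform} (in $\delta$) equicontinuity statement, or at least that the functions $(|\nabla u|-(1+\delta))_+$ converge uniformly as $\delta\to 0$. The cheap and correct way to see this is to use monotone/uniform convergence directly: $(|\nabla u|-(1+\delta))_+ \to (|\nabla u|-1)_+$ uniformly on bounded sets simply because $|\,(t-(1+\delta))_+ - (t-1)_+\,|\leq\delta$ pointwise, so no control of $C_\delta$ as $\delta\to0$ is needed at all — each $(|\nabla u|-(1+\delta))_+$ is continuous by the fixed-$\delta$ argument, and the uniform limit of continuous functions is continuous. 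With that observation the rest is bookkeeping.
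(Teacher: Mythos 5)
Your proposal is correct and follows essentially the same route as the paper, which presents this theorem precisely as a corollary of Theorem \ref{princ}, Lemma \ref{lipschitz} and Lemma \ref{gz}, assembled exactly as you do (uniformity in $e$ via Lemma \ref{lipschitz}, supremum over directions, $\delta\to 0$ as a uniform limit, then Lemma \ref{gz}). Your closing observation that the uniform convergence as $\delta\to0$ is free because $|(t-(1+\delta))_+-(t-1)_+|\leq\delta$ is a correct and slightly more explicit justification of the step the paper only sketches in Section 5.
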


\section{Applications}

As a particular consequence of the previous section, in the case $H=H_{(q)}$ (for $q\geq 2$), one has continuity for $F(\nabla u)$ itself, since the function $F$ vanishes on the ball $B_1$ and all the assumptions for applying the previous theorems are verified (in particular, \cite{BraCarSan} proves $H^1$ and $L^\infty$ regularity). 

As we said, $\sigma=F(\nabla u)$ optimizes $\int H^*(\sigma)$ under a divergence constraint, and in this case we have $ H_{(q)}^*(\sigma)= |\sigma|+\frac 1p |\sigma|^p$.  We summarize our results - under the variational formulation - in the following theorem.

\begin{theorem}
Fix $p\leq 2$.The unique minimizer $\bar{\sigma}$ of
\begin{equation}\label{min div fix}
\min \left\{ \int |\sigma|+\frac 1p |\sigma|^p\;:\; \sigma\in L^p(\Omega),\;\nabla\cdot\sigma = f,\;\sigma\cdot n =0\right\}
\end{equation}
is a continuous function in the interior of $\Omega$, provided $f\in W^{1,p}(\Omega).$
\end{theorem}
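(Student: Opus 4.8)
The plan is to recognize that this final theorem is, as the authors say, essentially a translation of the previously established results into the dual (variational) language, so almost all of the work has already been done. The strategy is: (1) identify the primal problem dual to \eqref{min div fix} and observe that its optimality condition is exactly \eqref{1.1} with a suitable convex $H$; (2) check that this $H$ satisfies the structural hypotheses used throughout the paper; (3) invoke the known $H^1\cap L^\infty$ regularity of $\bar\sigma=F(\nabla u)$ from \cite{BraCarSan}; (4) apply the previous theorem to conclude continuity.

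More concretely, first I would fix $q$ with $\frac1p+\frac1q=1$, so $q\geq 2$ since $p\leq 2$, and set $H=H_{(q)}$ as in \eqref{Hq}. A direct computation gives the Legendre transform $H_{(q)}^*(\sigma)=|\sigma|+\frac1p|\sigma|^p$, so that \eqref{min div fix} is precisely the dual problem $\min\{\int H^*(\sigma):\nabla\cdot\sigma=f\}$ discussed in Section 2; by convex duality its unique solution is $\bar\sigma=F(\nabla u)$ where $u$ solves $\nabla\cdot(F(\nabla u))=f$ with Neumann conditions and $F=\nabla H_{(q)}$. Uniqueness of $\bar\sigma$ follows from strict convexity of $H_{(q)}^*$ on its domain (equivalently, from the fact noted in Section 2 that $F(\nabla u)$ is the same for all minimizers $u$). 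Next I would verify the hypotheses: $H_{(q)}$ is convex and $C^{1,1}_{loc}$ for $q\geq 2$, it vanishes on $B_1$, and outside $B_{1+\delta}$ one has $D^2H_{(q)}(z)\geq c_\delta I_2$ with $c_\delta>0$ (the Hessian of $\frac1q(|z|-1)_+^q$ is bounded below away from $|z|=1$), so \eqref{ass on H} holds; moreover $D^2H_{(q)}$ is bounded on bounded sets.

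Then the regularity input: \cite{BraCarSan} proves that $\bar\sigma=F(\nabla u)\in H^1\cap L^\infty(\Omega)$ — here is where the hypothesis $f\in W^{1,p}(\Omega)$ enters, since that is the assumption under which the Sobolev estimate of \cite{BraCarSan} is available (and $L^\infty$ is the easier bound coming from the subsolution property). With $d=2$, $f\in W^{1,p}\subset L^{2+\ve}$ for a suitable $\ve$ (in dimension two $W^{1,p}$ embeds into every $L^r$, $r<\infty$, when $p\leq 2$; if $p=1$ one still has $W^{1,1}(\Omega)\hookrightarrow L^2$ and one should note the precise $\ve$ this yields), so all hypotheses of the preceding theorem on $u$ are met: $F=\nabla H$ with $H$ convex satisfying the Section 2 assumption, $D^2H$ bounded on bounded sets, $f\in L^{2+\ve}$, and $F(\nabla u)\in H^1\cap L^\infty$. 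Applying that theorem with $g=F=\nabla H_{(q)}$ — which is continuous and vanishes on $B_1$ — yields that $g(\nabla u)=F(\nabla u)=\bar\sigma$ is continuous in the interior of $\Omega$, which is the claim.

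The only genuinely delicate point is the bookkeeping around the exponents and the embedding $W^{1,p}(\Omega)\hookrightarrow L^{2+\ve}(\Omega)$ in dimension two, and, relatedly, making sure the $H^1$ regularity of \cite{BraCarSan} is indeed available under $f\in W^{1,p}$ rather than some stronger Sobolev assumption; the rest is a routine unwinding of convex duality. If one wanted to be careful one would also remark that the continuity statement is interior and that Neumann boundary conditions play no role in the local argument — exactly as the authors stress at several points that all results are local. I would therefore write this proof as a short paragraph that names $q$, records $H_{(q)}^*$, cites \cite{BraCarSan} for $H^1\cap L^\infty$, checks $f\in W^{1,p}\Rightarrow f\in L^{2+\ve}$, and concludes by the previous theorem.
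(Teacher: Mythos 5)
Your proposal is correct and follows essentially the same route as the paper's own proof: identify $H_{(q)}^*(\sigma)=|\sigma|+\frac1p|\sigma|^p$ so that $\bar\sigma=F(\nabla u)$, import $H^1\cap L^\infty$ regularity from \cite{BraCarSan} under the hypothesis $f\in W^{1,p}$, verify the structural assumptions on $H_{(q)}$, and conclude via the theorem of Section 5 since $F=\nabla H_{(q)}$ vanishes on $B_1$. If anything you are slightly more careful than the paper on the embedding $W^{1,p}(\Omega)\hookrightarrow L^{2+\ve}(\Omega)$ in dimension two (which genuinely requires $p>1$, consistent with $q<\infty$), a point the paper leaves implicit.
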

\begin{proof}
All the ingredients for this result have already been proved. We know that the optimal solution $\bar{\sigma}$ equals $F(\nabla u)$ where $F=\nabla  H_{(q)}$ (with $\frac 1q + \frac 1p = 1$ and $q\geq 2$)and $u$ is the solution of $\nabla\cdot (F(\nabla u))=f$. Thanks to what proven in \cite{BraCarSan}, the vector field $F(\nabla u)$ is both bounded and $H^1$. This last regularity result requires, for the proof which is performed in \cite{BraCarSan}, the Sobolev assumption on $f$. The $L^\infty$ result is presented in \cite{BraCarSan} for a Lipschitz $f$ but it should not be difficult to adapt to $f\in L^{2+\ve}$. On the contrary it does not seem easy to relax the assumption for Sobolev regularity.

Then we use Lemma \ref{lipschitz} to say that $v_\delta\in H^1(\Omega)$ as well, and that the $H^1$ norm of $v_\delta$ does not exceed $c_\delta^{-1}||F(\nabla)||_{H^1}$ (in particular, for fixed $\delta$, it is uniform with respect to the direction $e$). 

This allows to use the results of Sections 3 and 4 for getting first the continuity of $v_\delta$ and then of $F(\nabla u)$, since in this case $F$ vanishes on the ball $B_1$. 

Boundedness of $D^2H_{(q)}(\nabla u)$ comes from $H_{(q)}\in C^{1,1}_{loc}$ and boundedness of $\nabla u$.
\end{proof}

This result is interesting in itself as a regularity result for a variational problem under divergence constraints.

We also briefly present the consequences that our continuity proof has in the domain of continuous traffic congestion, as in \cite{CJS} and later in \cite{BraCarSan}.

The latter paper mainly aims at exploring the equivalence between Problem \eqref{min div fix} and the minimization of a total traffic intensity $\int H^*(i_Q(x))dx$ where $i_Q(x)$ denotes the quantity of traffic at $x$ associated to a probability  distribution $Q$ on the space of possible paths (the continuous framework is described through the choice of all Lipschitz curves in $\Omega$ as a set of admissible paths, instead of taking the paths on a given network). For modelling reasons, the choice of $H= H_{(q)}$ or in general of a function $H$ such that $(H^*)'(0)>0$ makes more sense than a simple power, since $(H^*)'(0)$ represents the cost for passing through a point with no traffic, which should be low but non-zero. 

It turns out from the analysis in \cite{BraCarSan} that one can build a measure $Q$ which is optimal for the traffic intensity problem starting from an optimal vector field $\bar{\sigma}$ for \eqref{min div fix}. We then follow the integral curves of a vector field 
$$\hat{\sigma}(t,x):=\frac{\bar{\sigma}(x)}{(1-t)f^+(x)+tf^-(x)},$$
where $f^+$ and $f^-$ are the positive and negative parts of $f$, respectively.  This would work fine provided the vector field is Lipschitz continuous, so that its integral curves are well-defined as the solutions of a standard Cauchy problem. Since $\bar{\sigma}$ may not always be assumed to be Lipschitz, due to the degeneracy of the equation that one gets with $H= H_{(q)}$, then the efforts in \cite{BraCarSan} were devoted to the proof of Sobolev regularity so as to apply DiPerna-Lions theory for weakly differentiable vector fields. 

Yet, in the case where $\bar{\sigma}$ turns out to be continuous (and the present paper proves such a result in dimension two), its integral curves may be at least interpreted as classical solution of an ODE with non-uniqueness and their regularity could be improved (solutions of a first-order Cauchy problem with continuous data are $C^1$).

Besides this first application, there is another interesting consequence in traffic congestion which is more linked to the topics of \cite{CJS}: it is proven in such a paper that optimal traffic distributions $Q$ satisfy a Wardrop equilibrium principium (which says that every path which is actually followed by somebody must be actually geodesics with respect to a metric taking into account traffic congestion: it is an equilibrium since the metric itself depends on $Q$ and $Q$ must be concentrated on $i_Q-$optimal curves). Yet, the metric one obtains is a function $g(i_Q(x))$ which is a priori neither continuous nor semi-continuous, since $i_Q$ is only $L^p$. The efforts of a large part of  \cite{CJS} aim at giving a meaning to geodesic distances and geodesic curves in such a setting. 

Yet, the optimal traffic measure $Q$ that one can build from $\sigma$ has the property that $i_Q=|\sigma|$ (see \cite{BraCarSan}) and this allows, provided $\sigma\in C^0$, to set the geodesic problem in a usual continuous framework.


\begin{thebibliography}{100}

\bibitem{BraCarSan} {\sc L. Brasco, G. Carlier, F. Santambrogio}, {\sl Congested traffic dynamics, weak flows and very degenerate elliptic equations}, J. Math. Pures et Appl., to appear.

\bibitem{CJS} {\sc G. Carlier, C. Jimenez, F. Santambrogio}, {\sl Optimal transportation with traffic congestion and Wardrop equilibria}, SIAM J. Control Optim. {\bf 47} (2008), 1330-1350.

\bibitem{DeG57} {\sc E. De Giorgi,} {\sl Sulla differenziabilità e l'analiticità delle estremali degli integrali multipli regolari.} Mem. Accad. Sci. Torino. Cl. Sci. Fis. Mat. Nat. (3) 3 1957 25--43.

\bibitem{DiBbook} {\sc E. DiBenedetto}, {\sl Degenerate Parabolic Equations}. Universitext. Springer-Verlag, New York, 1993
 
\bibitem{DiBVes} {\sc E. DiBenedetto, V. Vespri}, {\sl On the singular equation $\beta(u)_ t=\Delta u$}. Arch. Rational Mech. Anal. 132 (1995), no. 3, 247--309.

\bibitem{GilTru} {\sc D. Gilbarg, N. Trudinger},  {\sl Elliptic Partial Differential Equations of Second Order}, Springer Verlag, Berlin, 1977.

\end{thebibliography}
 \end{document}